\documentclass{article}
\usepackage{graphicx} 

\usepackage[utf8]{inputenc}
\usepackage[T1]{fontenc}
\usepackage[utf8]{inputenc}
\usepackage{newtxtext}
\usepackage{latexsym}
\usepackage{mathrsfs}
\usepackage{typearea}
\usepackage{indentfirst} 
\typearea{12}
\newenvironment{reptheorem}[1]
{\begin{trivlist}
\item[\hskip\labelsep{\bfseries Theorem~\ref{#1}.}]\itshape}
{\end{trivlist}}

\usepackage{etoolbox}

\apptocmd{\thebibliography}{%
  \setlength{\itemsep}{0pt}
  \setlength{\parsep}{0pt}
  \setlength{\parskip}{0pt}
}{}{}

\usepackage{amsmath}
\usepackage{cases}
\usepackage{amssymb}
\usepackage{mathrsfs}
\usepackage{color}
\usepackage{usebib}
\usepackage{comment}
\usepackage{hyperref}
\usepackage{cleveref}
\hypersetup{%
 setpagesize=false,%
 bookmarks=true,%
 bookmarksdepth=tocdepth,%
 bookmarksnumbered=true,%
 colorlinks=false,%
 pdftitle={},%
 pdfsubject={},%
 pdfauthor={},%
 pdfkeywords={}}
\usepackage[dvipsnames]{xcolor}
\hypersetup{hypertexnames=false,colorlinks=true,
            linkcolor=NavyBlue,
            citecolor=NavyBlue,
            urlcolor=NavyBlue
}

\newenvironment{repcorollary}[1]
{\begin{trivlist}
\item[\hskip\labelsep{\bfseries Corollary~\ref{#1}.}]\itshape}
{\end{trivlist}}

\makeatletter

\@addtoreset{equation}{section}
\makeatother

\usepackage{amsthm}
\newtheorem{thm}{Theorem}[section]

\newtheorem{lem}[thm]{Lemma}
\newtheorem{prp}[thm]{Proposition}
\newtheorem{rmk}[thm]{Remark}

\newtheorem{cor}[thm]{Corollary}

\newtheorem{conj}[thm]{Conjecture}

\newtheorem{thm'}{Theorem}
\newtheorem{conj'}[thm']{Conjecture}
\newtheorem{ques'}[thm']{Question}

\usepackage{cleveref}

\renewenvironment{abstract}{%
  \small
  \begin{quotation}
}{%
  \end{quotation}
}

\newcommand{\Z}{\mathbb{Z}}
\newcommand{\Wh}{\mathrm{Wh}}

\title{Unknotting number, ribbon concordance, and singular instantons}
\author{
Hayato Imori
\and
JungHwan Park
\and
Masaki Taniguchi
}
\date{}

\begin{document}

\maketitle

\begin{abstract}
We use equivariant singular instanton Floer theory with the Chern--Simons filtration to obstruct same-sign unknotting operations. We show that, for a large class of slice knots obtained through ribbon concordance, any unknotting sequence of null-homologous twists must contain both signs. The same method gives a $3$--manifold analogue, obstructing certain homology $3$--spheres from surgery on a knot and providing evidence for the monotonicity of the Dehn surgery number under ribbon homology cobordism.
\end{abstract}

\section{Introduction}

The \emph{unknotting number} $u(K)$ of a knot $K$ is the minimal number of crossing changes required to transform $K$ into the unknot. Although its definition is simple and classical~\cite{Wendt}, the invariant remains poorly understood and surprisingly subtle.  For example, Brittenham and Hermiller~\cite{BrittenhamHermiller} recently showed that the unknotting number is not additive under connected sum, disproving a long-standing conjecture.

A celebrated theorem of Scharlemann~\cite{Scharlemann1985} states that any knot with unknotting number one is prime; equivalently, every composite knot has unknotting number greater than one.  The following conjecture predicts a strengthening of Scharlemann's theorem and serves as one of the main motivations for this article.

\begin{conj}\label{conj:band}
Any band sum of two nontrivial knots has unknotting number greater than one.
\end{conj}

This conjecture is also closely related to a problem in Kirby's problem
list~\cite[Problem~1.53]{K3}, which asks which knot invariants are monotone
under ribbon concordance.  Given two knots $K_-$ and $K_+$, recall that $K_-$ is
\emph{ribbon concordant} to $K_+$ if there is a smooth concordance from $K_-$ to $K_+$ with no local maxima.  This relation was introduced by Gordon~\cite{GordonRibbon}; Agol recently proved that it defines a partial order on knots in $S^3$~\cite{AgolRibbon}. To the best of the authors' knowledge, it is not known whether the unknotting number is monotone under ribbon concordance; that is, whether the existence of a ribbon concordance from $K_-$ to $K_+$ implies $$u(K_-)\leq u(K_+).$$ Since there is a ribbon concordance from $K_1\mathbin{\#} K_2$ to any band sum of $K_1$ and $K_2$ by Miyazaki~\cite{Miyazaki:1998}, monotonicity of the unknotting number under ribbon concordance would imply Conjecture~\ref{conj:band} from Scharlemann's theorem.

In this article, we build on recent advances in equivariant singular instanton Floer theory with a Chern--Simons filtration~\cite{DS19, DS20, Ha21, DISST} and provide supporting evidence for Conjecture~\ref{conj:band} as follows. Recall that for each parameter
\begin{align} \label{admissible}
\omega\in
\left(0,\frac12\right)
\cap
\left\{
\frac{m}{2p^n}
:
m\in\mathbb Z,\;
p\text{ is prime},\;
n\in\mathbb Z_{>0}
\right\},
\end{align}
there is an instanton local class homomorphism
\[
\Omega^{\mathfrak{E}, \omega}\colon \mathcal{C} \to
\Theta^{\mathfrak{E}, \omega}_{R^{\omega}},
\]
from the smooth knot concordance group to the group of local equivalence
classes of enriched $S$-complexes, constructed in
\cite[Sections~5 and~7]{DISST}.

 Knots with nontrivial $\Omega^{\mathfrak{E},\omega}$--invariant include those representing elements of infinite order in Levine's algebraic knot concordance group~\cite{Levine1969KnotCobordism}, as well as squeezed knots with nonzero Rasmussen invariant~\cite{FellerLewarkLobbSqueezed}; see Proposition~\ref{prp:basic-nontrivial-Omega} for details.  In particular, this includes all non-slice quasipositive knots; see \cite{Rudolph1993} for the definition of quasipositivity. Finally, recall that each crossing change is assigned a sign, either positive or negative.

\begin{thm}\label{thm:main1}
Let $K$ be a slice knot, and suppose that there is a ribbon concordance from $K_1\# K_2$ to $K$. If one of the summands, say $K_1$, satisfies
$\Omega^{\mathfrak{E}, \omega}([K_1]) \neq [0]$
for some $\omega$ satisfying \eqref{admissible}, then any sequence of crossing changes transforming $K$ into the unknot must contain both positive and negative crossing changes.

In particular, if a slice knot $K$ is obtained as a band sum of two nontrivial knots and one of the summands represents a nontrivial class in $\Omega^{\mathfrak{E}, \omega}$, then $K$ has unknotting number greater than one.
\end{thm}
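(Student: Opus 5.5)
We argue by contradiction: assume $K$ can be unknotted using crossing changes of a single sign, say all positive (the negative case follows by mirroring, since $\Omega^{\mathfrak{E},\omega}$ is a homomorphism and $[\overline{K_1}]=-[K_1]$ is nontrivial exactly when $[K_1]$ is). The plan combines three ingredients, all of which should be available from the equivariant singular instanton framework of \cite{DISST}.

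\textbf{(1) Monotonicity under crossing changes.} A positive crossing change is realized by a twisted annulus cobordism in $S^3\times[0,1]$ (or a cobordism through a blow-up $\overline{\mathbb{CP}}^2$ punctured, with the annulus meeting the exceptional sphere algebraically zero times). Such negative-definite cobordisms of pairs induce local maps of enriched $S$-complexes in one direction. Consequently a sequence of $n$ positive crossing changes from $K$ to $U$ yields the inequality $\Omega^{\mathfrak{E},\omega}([K]) \le [0]$ (or $\ge$, depending on convention) in the partial order on $\Theta^{\mathfrak{E},\omega}_{R^\omega}$. Since $K$ is slice, $\Omega^{\mathfrak{E},\omega}([K])=[0]$ anyway, so this alone says nothing. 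The key is to apply this \emph{not} to $K$ but to $K_1\#K_2$.

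\textbf{(2) Ribbon concordance passes the unknotting sequence back.} Let $C$ be the ribbon concordance from $K_1\#K_2$ to $K$, and let $\overline{C}$ be its reverse. By the standard Zemke/Gordon argument (the doubled concordance $\overline C\circ C$ is the identity cobordism after surgery on tori/1-handle-2-handle cancellation), the induced map of $C$ on the enriched $S$-complex of $K_1\#K_2$ has a left inverse given by $\overline{C}$, up to chain homotopy respecting the filtration. I will use this to show that the enriched complex of $K_1\#K_2$ is a filtered local retract of that of $K$. Composing with the cobordism from the positive crossing changes, the complex of $K_1\#K_2$ then maps to and from the (filtered) trivial complex of $U$ in a way which yields a one-sided local inequality: $\Omega^{\mathfrak E,\omega}(K_1\#K_2)\le 0$ and, combining with the slice disk (which yields a local equivalence for $K$ but only a retraction for $K_1\#K_2$), I expect to get $\Omega^{\mathfrak E,\omega}(K_1)+\Omega^{\mathfrak E,\omega}(K_2)$ bounded by $0$ on one side. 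The crucial extra input is the Chern--Simons filtration: the crossing-change cobordism has a controlled energy, and the retraction through $C$ has zero energy shift, so the filtered information shows the composite is a genuine local map.

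\textbf{(3) Using the sliceness of $K$ to get the contradiction.} Since $K$ is slice and $C$ is ribbon, $K_1\#K_2$ is actually ribbon-concordant-to-slice, hence $[K_1]+[K_2]$ vanishes in $\Omega$ only in a weak sense; what we really obtain is that both $K_1\#K_2$ dominates $0$ and is dominated by a complex which is simultaneously $\le 0$. I plan to exploit the specific structure: the crossing changes yield that each \emph{summand's} invariant lies on a fixed side, and the retraction forces $\Omega(K_1)+\Omega(K_2)\le 0\le \Omega(K_1)+\Omega(K_2)$ while also the positive-unknotting forces $\Omega(K_1)\le 0$, $\Omega(K_2)\le0$ via the splitting of the enriched complex of a connected sum (tensor product) and retractions onto each factor. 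Together these force $\Omega(K_1)=0$, contradicting the hypothesis. The second statement follows because a band sum admits a ribbon concordance from $K_1\#K_2$ by \cite{Miyazaki:1998}, and unknotting number one gives a single-sign sequence.

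The main obstacle is step (2)--(3): establishing that the ribbon concordance gives a filtered local retraction compatible with the enriched structure, and that this retraction, combined with tensor-product splitting, transfers a one-sided inequality to each summand separately. I would first try to prove the retraction statement by computing the doubled cobordism as a connected sum with $S^1\times S^3$ pieces whose instanton maps are identity, as in the Heegaard Floer setting.
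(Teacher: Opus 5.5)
Your step (3) is where the argument fails, and the failure is structural rather than technical. Every inequality in the partial order that the available cobordisms produce is vacuous here. $K$, $K_1\# K_2$ and $U$ are all slice, so their classes are $[0]$. A Zemke-type retraction in step (2), even if established, only recovers $\Omega^{\mathfrak E,\omega}([K_1\#K_2])=[0]$, which you already have; note that it vanishes exactly, not ``in a weak sense''.

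The assertion that positive unknotting forces $\Omega^{\mathfrak E,\omega}([K_1])\le[0]$ and $\Omega^{\mathfrak E,\omega}([K_2])\le[0]$ separately ``via tensor-product splitting and retractions onto each factor'' has no basis. A local map from $C_1\otimes C_2$ to $C_1$ exists only if $C_2$ already lies on the corresponding side of the trivial complex. Moreover, an unknotting sequence for $K$, or a disk for $K_1\#K_2$, does not split into data for the individual summands. Since $\Omega^{\mathfrak E,\omega}([K_2])=-\Omega^{\mathfrak E,\omega}([K_1])$, your two summand-wise inequalities together are equivalent to $\Omega^{\mathfrak E,\omega}([K_1])=[0]$, so step (3) assumes the conclusion. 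The most you can extract at the local level is the annulus from $K_1$ to $-K_2$ obtained from the disk via a pair of pants. Its local map gives $\Omega^{\mathfrak E,\omega}([K_1])\le\Omega^{\mathfrak E,\omega}([-K_2])=\Omega^{\mathfrak E,\omega}([K_1])$, which says nothing. (Also, a crossing change is not realized by an embedded annulus in $S^3\times[0,1]$; only your blow-up description is correct.)

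The missing idea is an input beyond the partial order: the equality case of the filtered cobordism inequality, combined with a fundamental-group computation.

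\emph{Topological input.} Reversing $n$ positive twists produces a null-homologous disk for $K$ in $(\#_n\overline{\mathbb{CP}}^{\,2})^\circ$ whose complement has $\pi_1\cong\mathbb Z$, because each $2$-handle is attached along a curve with linking number zero. Precomposing with the ribbon concordance keeps $\pi_1\cong\mathbb Z$: seen from the complement it attaches only $2$- and $3$-handles, while $H_1$ stays $\mathbb Z$. This, not a retraction, is where ribbonness is used. A pair of pants in a collar then gives a null-homologous annulus $A$ from $K_1$ to $-K_2$ in $W=\#_n\overline{\mathbb{CP}}^{\,2}\smallsetminus(\mathring B^4\sqcup\mathring B^4)$, with $\pi_1(W\smallsetminus A)\cong\mathbb Z$ and $\chi(W)+\sigma(W)=0$.

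\emph{Analytic input.} $(W,A)$ is negative definite of strong height $0$ with $\kappa^\omega_{\min}=0$, so Theorem~\ref{thm:cob_ineq} gives $r_0^\omega(K_1)\le r_0^\omega(-K_2)$. This is an equality because the two knots are concordant. Arrange $r_0^\omega(K_1)<\infty$ using Lemma~\ref{lem:Omega-detected-by-R}, swapping the summands if necessary. The equality case then yields an irreducible $SU(2)$ representation of $\pi_1(W\smallsetminus A)\cong\mathbb Z$, which is impossible.

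Your proposal uses the Chern--Simons filtration only to certify that composites are local maps, and never looks at the surface complement. So it cannot reach this contradiction.
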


A few remarks are in order. We begin with an example showing that the conclusion of Theorem~\ref{thm:main1} does not hold in complete generality. Let $K=4_1\mathbin{\#}4_1$, where $4_1$ denotes the figure-eight knot. Since $4_1$ is amphichiral, the knot $K$ is slice. Moreover, since $4_1$ is amphichiral and has unknotting number one, it can be unknotted by either a positive or a negative crossing change. Consequently, $K$ can be unknotted by two positive crossing changes, as well as by two negative crossing changes. Thus, $K$ does not satisfy the conclusion of Theorem~\ref{thm:main1}. More generally, analogous examples can be obtained by replacing $4_1$ with an amphichiral knot of unknotting number one.

Second, the unknotting number admits the following natural generalization. The
\emph{untwisting number} of a knot $K$, denoted by $tu(K)$, is the minimal
number of null-homologous twists required to transform $K$ into the unknot. A
\emph{null-homologous twist} is obtained by choosing an unknotted curve
$C \subset S^3 \smallsetminus K$ satisfying $\operatorname{lk}(C,K)=0$ and performing
$\pm 1$-surgery on $C$; see~\cite{MathieuDomergue1988, 
InceUntwisting,InceUntwistingHF,LivingstonNullHomologousUnknottings,
McCoyNullHomologousTwisting,AllenInceKimRuppikTurner2024}
for related results. We call the twist
\emph{positive} if the surgery coefficient is $+1$, and \emph{negative} if the
surgery coefficient is $-1$. We use the analogous convention for crossing
changes: a positive crossing change changes a negative crossing into a positive crossing. The conclusion of Theorem~\ref{thm:main1} extends to this setting. Namely, under the hypotheses of Theorem~\ref{thm:main1}, every sequence of null-homologous twists transforming $K$ into the unknot must contain both positive and negative twists.

Finally, the knots under consideration are slice, and hence any obstruction to
the unknotting number which factors through the smooth $4$-ball genus vanishes;
for instance, the bounds coming from $\tau$ and $s$ give no information
\cite{OzsvathSzabo2003,Rasmussen2010}. Moreover,
Theorem~\ref{thm:main1} applies to many knots with trivial Alexander
polynomial. For such knots, the standard classical algebraic lower bounds for the
unknotting number also vanish; see, for example,
\cite{Murakami1990,Nakanishi1981,Lickorish1985,
BorodzikFriedlClassicalI,BorodzikFriedl2014,FellerLewarkClassicalUpper,
FKLNP:2022,FellerLewarkBalanced}.\footnote{In fact, if the summand $K_1$ has infinite order in the algebraic knot concordance group, then the crossing-change conclusion of
Theorem~\ref{thm:main1} also follows from
\cite[Theorem~4.1]{BorodzikFriedlClassicalI} and
\cite[Proposition~3.4]{FKLNP:2022}.
} Here and throughout, for a knot $K$, we denote by $-K$ the knot obtained by taking the mirror image of $K$ and reversing its orientation. This knot represents the inverse of $K$ in the smooth knot concordance group. For concrete examples, let $\Wh$ denote the positive Whitehead double pattern.
If $K$ is strongly quasipositive, then $\Wh(K)$ is also strongly quasipositive
\cite{Rudolph1993}. Moreover, $\Wh(K)$ can be unknotted by changing a positive
clasp crossing to a negative crossing. This gives the following consequence.

\begin{cor}\label{cor:whitehead-double}
Let $K$ be a non-slice strongly quasipositive knot, and set
$J_K=\Wh(K)\#-\Wh(K)$. Then any sequence of null-homologous twists
transforming $J_K$ into the unknot must contain both positive and negative
twists. In particular, $tu(J_K)=2$.
\end{cor}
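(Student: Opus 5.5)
The plan is to deduce the corollary from the null-homologous twist version of Theorem~\ref{thm:main1}, stated in the remarks after that theorem, by checking its hypotheses for $J_K$ and then exhibiting an explicit unknotting sequence of length two.

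\textbf{Step 1: $J_K$ is slice.} Since $J_K=\Wh(K)\#-\Wh(K)$ and $-\Wh(K)$ is the concordance inverse of $\Wh(K)$, the knot $J_K$ is slice; in fact it is ribbon.

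\textbf{Step 2: the ribbon concordance hypothesis.} We take $K_1=\Wh(K)$ and $K_2=-\Wh(K)$ and use the identity ribbon concordance from $K_1\#K_2$ to $J_K$.

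\textbf{Step 3: nontriviality of $\Omega^{\mathfrak{E},\omega}([K_1])$.} Because $K$ is strongly quasipositive, $\Wh(K)$ is strongly quasipositive by \cite{Rudolph1993}, hence quasipositive. It remains to see that $\Wh(K)$ is not slice. For a strongly quasipositive knot, $g_4=g_3$ (Rudolph, via the slice-Bennequin inequality), so $\Wh(K)$ is slice only if it is the unknot. However, $\Wh(K)$ has Seifert genus one whenever $K$ is nontrivial, and $K$ is nontrivial because it is non-slice. Equivalently, one can use $\tau(\Wh(K))=1$, which holds since $\tau(K)=g_4(K)>0$ for the non-slice strongly quasipositive knot $K$ (Hedden). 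So $\Wh(K)$ is a non-slice quasipositive knot, and Proposition~\ref{prp:basic-nontrivial-Omega} gives $\Omega^{\mathfrak{E},\omega}([\Wh(K)])\neq[0]$ for some admissible $\omega$. This is the only step that requires any care, namely confirming that $\Wh(K)$ is non-slice; the citation to $\tau$ or to $g_4=g_3$ for strongly quasipositive knots settles it.

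\textbf{Step 4: applying the theorem.} The twist version of Theorem~\ref{thm:main1} now shows that any sequence of null-homologous twists unknotting $J_K$ contains both a positive and a negative twist. In particular such a sequence has length at least $2$, so $tu(J_K)\ge 2$.

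\textbf{Step 5: upper bound.} $\Wh(K)$ is unknotted by changing one clasp crossing, from positive to negative. By mirroring, $-\Wh(K)$ is unknotted by a single crossing change of the opposite sign. A crossing change is a null-homologous twist: it is $\pm1$-surgery on a small unknot encircling the two strands with linking number zero. Performing one such twist on each summand turns $J_K$ into $U\#U=U$, so $tu(J_K)\le 2$. Combined with Step 4, this gives $tu(J_K)=2$, and the two twists used are necessarily of opposite signs.
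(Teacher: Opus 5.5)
Your proposal is correct and follows the paper's proof essentially step for step. You apply Theorem~\ref{mainthmbody} with $K_1=\Wh(K)$, $K_2=-\Wh(K)$ and the trivial ribbon concordance, use Proposition~\ref{prp:basic-nontrivial-Omega} at $\omega=1/4$ for nontriviality, and exhibit the two opposite-sign clasp crossing changes for $tu(J_K)\le 2$; your explicit check that $\Wh(K)$ is non-slice (so that this quasipositive, hence squeezed, knot has $s(\Wh(K))=2\neq 0$) fills in a point the paper leaves implicit.
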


Our work is reminiscent of Auckly's work~\cite{Auckly,AucklyHyperbolic};
see also~\cite{SatoTaniguchi2020}.  Auckly constructed two irreducible homology $3$--spheres, one toroidal and
one hyperbolic, which are not obtainable by surgery on a knot, using Taubes's
periodic-end theorem~\cite{T87}. Theorem~\ref{thm:main1} may be viewed as a knot-theoretic analogue of this circle of ideas.  In both settings, gauge-theoretic information carried by an initial object is propagated through a ribbon-type cobordism to certify that the resulting object cannot have a simple topological description. We discuss this in more detail below.

There has been substantial work on understanding which $3$--manifolds can, and
which cannot, be obtained by surgery on knots in $S^3$; see, e.g.,
\cite{Berge,Gordon-Luecke:1989,Boyer-Lines:1990,Auckly, AucklyHyperbolic,
OzsvathSzaboLens,HeddenBerge,GreeneLensSpace,HKL,Hom-Lidman:2018, HeddenKimMarkPark2019,
SatoTaniguchi2020}.  See also~\cite{GordonICM} and~\cite[Section~1.1]{HKL}
for excellent surveys.  Here we are mainly interested in homology $3$--spheres which cannot be realized as surgery on a knot in $S^3$.  Auckly's argument shows that certain homology $3$--spheres cannot bound smooth simply connected definite $4$--manifolds, and combines this with the fact that any homology $3$--sphere obtained by surgery on a knot in $S^3$ must bound such a $4$--manifold.  His examples start from a homology $3$--sphere $Y$ bounding a simply connected
definite $4$--manifold with non-diagonalizable intersection form, and pass to the
desired irreducible homology $3$--sphere $M$ through a cobordism from
$Y \mathbin{\#} -Y$ using one $1$--handle and one $2$--handle. Taubes's
periodic-end theorem is then used to obstruct a simply connected definite
filling of $M$.  The results below substantially enlarge the scope of this construction: in
particular, they can be applied even when the initial homology $3$--sphere $Y$ bounds
no definite $4$--manifold at all, a case in which Taubes's theorem cannot be
applied.

Recall that, for a closed connected oriented $3$--manifold $M$, the
\emph{Dehn surgery number} $DS(M)$ is the minimal number of components of a
link $L\subset S^3$ such that Dehn surgery on $L$ yields $M$.  A
$3$--manifold analogue of Scharlemann's theorem~\cite{Scharlemann1985} follows
from a theorem of Gordon and Luecke~\cite{Gordon-Luecke:1989}: reducible homology $3$--spheres cannot be obtained by surgery on knots.  Equivalently,
every reducible homology $3$--sphere has Dehn surgery number greater than
one.

An analogue of Conjecture~\ref{conj:band}, in the spirit of
\cite[Problem~1.53]{K3}, can be formulated for $3$--manifolds as follows.
Recall that, given homology $3$--spheres $Y_-$ and $Y_+$, a \emph{ribbon homology cobordism} from $Y_-$ to $Y_+$ is a homology cobordism admitting a handle decomposition relative to $Y_-$ with only $1$-- and $2$--handles.  Recent work
shows that this relation defines a partial order on the set of diffeomorphism
classes of closed oriented $3$--manifolds~\cite{HuberRibbon,
FriedlMisevZentner,Ghanem}; see also~\cite{DLVVW} for the original formulation.

\begin{conj}\label{ques:DS-monotone}
Let $Y_-$ and $Y_+$ be homology $3$--spheres.  Suppose there is a ribbon homology
cobordism from $Y_-$ to $Y_+$.  Then
\[
DS(Y_-)\leq DS(Y_+).
\]
In particular, if $Y_1$ and $Y_2$ are nontrivial homology $3$--spheres and there is
a ribbon homology cobordism from $Y_1 \mathbin{\#} Y_2$ to a homology $3$--sphere
$M$, then $M$ cannot be obtained by surgery on a knot in $S^3$.
\end{conj}

We give an instanton-theoretic obstruction which provides evidence for Conjecture~\ref{ques:DS-monotone} and, in particular, rules out one-component surgery descriptions for a certain class of homology $3$--spheres.  Let $\Theta^3_{\mathbb Z}$ denote the homology cobordism group of
oriented homology $3$--spheres. For a homology $3$--sphere $Y$, let $U\subset Y$ be an unknot contained in a $3$-ball. Applying the homology concordance homomorphism constructed in \cite[Section~5]{DISST} to the pair $(Y,U)$ defines a homomorphism
\[
\Omega\colon \Theta^3_{\mathbb Z}\longrightarrow
\Theta^{\mathfrak E}.
\]  Equivalently, $\Omega([Y])$ is
the local equivalence class of the enriched singular instanton $S$--complex
associated to $(Y,U)$, with holonomy parameter $\omega=1/4$. The following theorem is the $3$--manifold analogue of the knot-theoretic obstruction in Theorem~\ref{thm:main1}.

\begin{thm}\label{thm:DS-ribbon-obstruction}
Let $Y$ be a homology $3$--sphere representing the trivial element in
$\Theta^3_\Z$, and  suppose that there is a ribbon homology cobordism from
$Y_1 \mathbin{\#} Y_2$ to $Y$.  If one of the summands, say $Y_1$, satisfies
$\Omega([Y_1])\neq [0]$,
then $Y$ cannot be obtained by surgery on a knot in
$S^3$.
\end{thm}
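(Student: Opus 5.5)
We argue by contradiction: suppose $Y=S^3_{p/q}(J)$ for some knot $J\subset S^3$. Since $Y$ is a homology sphere, $p=\pm1$, so $Y=S^3_{\pm 1/n}(J)$. Equivalently, $Y$ is obtained from $S^3$ by a sequence of $n$ twists of the same sign along parallel copies of $J$, and a $\pm1/n$-surgery can be realized as a chain of $\pm1$-surgeries. This gives a cobordism $W$ between $S^3$ and $Y$ built from $n$ two-handles all attached with framing of the same sign, and in particular a definite cobordism of a fixed sign. Capping with $B^4$, $Y$ bounds a (negative or positive) definite simply connected $4$-manifold $X$ obtained from the trace of such surgeries; we may arrange $H_1(X)=0$ and the intersection form definite of a fixed sign determined by the sign of the surgery.

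The main input is an inequality for the enriched $S$-complex under definite cobordisms from \cite{DISST}: for a negative definite cobordism of pairs (here $(X^\circ, \text{product annulus on } U)$ from $(S^3,U)$ to $(Y,U)$), the cobordism map gives a local map, so $\Omega([S^3])=[0]\le \Omega([Y])$ in the partial order on $\Theta^{\mathfrak E}$. The opposite-sign case gives $\Omega([Y])\le [0]$ by reversing orientation. Since $[Y]=0$ in $\Theta^3_\Z$ we already have $\Omega([Y])=[0]$, so we need finer information. Here we use the ribbon cobordism: a ribbon homology cobordism $R$ from $Y_1\# Y_2$ to $Y$ has the property (via its double $R\cup -R$ being built from cancelling handles, as in the ribbon concordance arguments) that the induced local map on enriched $S$-complexes is split injective. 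Composing the definite cobordism with $-R$ then yields a definite-sign local map from $\Omega(Y_1)+\Omega(Y_2)$ to $[0]$ compatible with the filtration; I would aim to show the composite forces $\Omega(Y_1\# Y_2)$ to admit local maps in one direction to $[0]$ with the Chern--Simons filtration levels controlled by the fact that $W$ has all handles of one sign, so that $\Omega(Y_1)+\Omega(Y_2)\le [0]$ (say), while $[Y]=0$ and ribbon injectivity give the reverse comparison, hence $\Omega(Y_1)+\Omega(Y_2)=[0]$ in a strong sense, i.e., $\Omega(Y_1)=-\Omega(Y_2)$.

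To reach a contradiction one needs to exploit more than local equivalence: the key is that the enriched $S$-complex of $Y$ itself (not just its local class) contains, by split injectivity of the ribbon map, a copy of the complex of $Y_1\# Y_2$ as a summand, while the same-sign surgery description produces a definite cobordism from $S^3$ whose map, combined with its energy bound (all instantons on a definite one-sign surgery trace have small filtration shift), constrains the filtered complex of $Y$ to be locally trivial at every filtration level in a one-sided way. The tensor-product structure of the summand then shows $\Omega(Y_1)$ is both $\le$ and $\ge [0]$ unless $\Omega(Y_1)=[0]$. I expect the main obstacle to be precisely this step: upgrading from local equivalence classes to a statement about the actual summand and showing that a one-sided (definite) bound on $Y$ transfers to each tensor factor $Y_1$ separately; I would first try to use the filtered special cycle/$\Gamma$-type invariants from \cite{DISST} which are monotone under definite cobordisms and subadditive under connected sum, and prove that both signs of nontrivial invariant for $Y_1$ cannot be simultaneously cancelled under a one-sign cobordism.
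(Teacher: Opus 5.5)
There is a genuine gap, and it is exactly the step you flag as the main obstacle. What you derive before it adds nothing. Since $R$ is a homology cobordism, $[Y_1]+[Y_2]=[Y]=0$, so $\Omega([Y_1])=-\Omega([Y_2])$ and $\Omega([Y])=[0]$ hold for free. No comparison in the local-equivalence order coming from a definite filling of $Y$ can say more.

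More seriously, the route you propose for the last step cannot succeed. One-sided filtered bounds, energy estimates on a one-sign trace, split injectivity of ribbon maps and monotone $\Gamma$-type invariants all use only definiteness and Chern--Simons data. Take $Y=Y_1\#-Y_1$ with $R$ the product cobordism. Then $Y=\partial\bigl((Y_1\smallsetminus\mathring{B}^3)\times[0,1]\bigr)$ is the boundary of a homology ball. This is a vacuously definite filling with zero energy, so every inequality of the kind you list holds for it, yet the theorem asserts that this $Y$ is not surgery on a knot. What distinguishes the filling $X$ coming from knot surgery is that it is \emph{simply connected}, and your argument never feeds $\pi_1$ into the gauge theory. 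In the paper the ribbon hypothesis enters only topologically, to preserve simple connectivity, not through any injectivity of cobordism maps.

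The missing idea is the equality case of the filtered cobordism inequality, Theorem~\ref{thm:cob_ineq} (from \cite[Theorem~7.10]{DISST}): if $r_s^\omega<\infty$, $\kappa_{\min}^\omega=0$ and equality holds, then $\pi_1(W\smallsetminus S)$ has an irreducible $SU(2)$ representation. The paper applies it as follows.
\begin{enumerate}
\item Relative to $Y$, the cobordism $R$ consists only of $2$-- and $3$--handles. Hence $V=R\cup_Y X$ is simply connected and definite, with boundary $-(Y_1\#Y_2)$.
\item Attaching a $3$--handle along the connected-sum sphere gives a simply connected definite cobordism between $Y_1$ and $-Y_2$. These are homology cobordant because $[Y_2]=-[Y_1]$.
\item Since $\Omega([Y_1])\neq[0]$, \cite[Proposition~7.13]{DISST} makes one of $r_0(\pm Y_1)$ finite. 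After choosing orientation and direction, you get a simply connected negative-definite cobordism $W$ with $\chi(W)+\sigma(W)=0$, whose incoming end has finite $r_0$.
\item Insert a trivial annulus $A$ between unknots. This gives strong height $0$, $\kappa_{\min}=0$ and $\pi_1(W\smallsetminus A)\cong\mathbb{Z}$.
\item The inequality $r_0(\text{in})\le r_0(\text{out})$ is then an equality, because $r_0$ is a homology cobordism invariant. The equality case yields an irreducible representation of $\mathbb{Z}$, which is impossible.
\end{enumerate}
This is Lemma~\ref{lem:r0-definite-cobordism}. No summand-level or tensor-factor analysis is needed.
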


Existing computations in instanton theory show that Theorem~\ref{thm:DS-ribbon-obstruction} applies to the following examples. In the following statement, a \emph{nontrivial integral linear combination} of
the homology $3$--spheres $Y_1,\ldots,Y_m$ means an expression
\[
c_1[Y_1]+\cdots+c_m[Y_m]\in\Theta^3_{\mathbb Z},
\qquad c_1,\ldots,c_m\in\mathbb Z,
\]
in which at least one coefficient $c_i$ is nonzero.

\begin{thm}\label{thm:example}
Let $Y$ be a homology $3$--sphere. Suppose that $Y$ is one of the following:
\begin{itemize}
\item A Seifert homology $3$--sphere with positive Fintushel--Stern
$R$--invariant \cite{FintushelStern1985}, or more generally, a nontrivial integral linear combination of
Seifert homology $3$--spheres detected by~\cite[Corollary~2.1]{Furuta1990HomologyCobordism};
\item A nontrivial integral linear combination of homology $3$--spheres of the form
$\{S^3_{1/n}(K)\}_{n\in  \Z_{>0}}$, where $K$ is a squeezed knot with positive Rasmussen invariant.
\end{itemize}
Then $\Omega([Y])\neq [0]$.
\end{thm}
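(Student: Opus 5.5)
Since $\Omega$ is a homomorphism $\Theta^3_\Z\to\Theta^{\mathfrak E}$, it suffices to check that $\Omega$ is nonzero on the relevant classes. The plan is to take a known homomorphism that vanishes on the trivial local class and show that it is nonzero on each class listed.

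\emph{Step 1: choose a detecting invariant.} I would use the instanton invariants of the enriched local class. One option is the $\Gamma$-type invariants and $r_s$-invariants of Nozaki--Sato--Taniguchi, or their singular analogues $\Gamma_{(Y,U)}$ from \cite{DISST}. These are defined from the Chern--Simons filtration on the $S$-complex and are invariant under local equivalence. On the trivial class $[0]$ they take the trivial values (for example $r_0=\infty$). It is then enough to show that some such invariant of $(Y,U)$ is nontrivial.

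\emph{Step 2: Seifert spheres.} For a Seifert sphere $\Sigma(a_1,\dots,a_n)$ with $R>0$, Fintushel--Stern produce a negative definite orbifold cobordism carrying a reducible-boundary ASD connection. I expect the singular version with $U$ and $\omega=1/4$ to reproduce this through the orbifold correspondence between $SU(2)$ with holonomy $1/4$ around $U$ and $SO(3)$ adjoint flat connections. Concretely, I would show that $\Omega([\Sigma])$ has a nontrivial Froyshov-type or $r_0$-type invariant at level $1/(4a_1\cdots a_n)$.

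\emph{Step 3: linear combinations via Furuta's criterion.} Furuta's Corollary 2.1 shows linear independence by ordering the spheres by minimal Chern--Simons value and exploiting a filtration level that appears in only one summand. I would mimic this using the behaviour of $r_s$ under connected sum, a Künneth-type inequality for tensor products of enriched complexes. The summand with the extremal Chern--Simons level then cannot be cancelled, so the combination has nontrivial local class.

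\emph{Step 4: surgeries $S^3_{1/n}(K)$.} For a squeezed knot $K$ with $s(K)>0$, \cite{DISST} shows that squeezed knots have local class equal to that of a torus knot (or of the $s$-determined model). Surgery relates the local class of $(S^3_{1/n}(K),U)$ to that of $K$ through a negative definite cobordism. I would compare with $1/n$-surgery on a positive torus knot, which is a Seifert sphere with $R>0$ and whose Chern--Simons levels $\sim 1/(4pq(pqn\pm1))$ are distinct for distinct $n$. Step 3 then gives independence.

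\textbf{Main obstacle.} The hardest part is Step 4. One must transfer Chern--Simons filtration information from the squeezed knot to its surgeries in a way that keeps track of which filtration levels actually occur; inequalities alone may be insufficient for independence.
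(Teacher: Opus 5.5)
\paragraph{Main gap: Step 4.} The paper's proof is different here, and your Step 4 has a genuine gap. First, \cite{DISST} does not show that a squeezed knot has the same (enriched) local class as a torus knot. What is available is that $2\widetilde{s}$ is a slice-torus invariant, so $\widetilde{s}(K)=s(K)/2$ by Feller--Lewark--Lobb. The enriched class, by contrast, records Chern--Simons filtration levels that genuinely depend on $K$.

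Second, relating $S^3_{1/n}(K)$ to $S^3_{1/n}(T_{p,q})$ by a definite cobordism would give at best a one-sided inequality on $r_0$ via Theorem~\ref{thm:cob_ineq}, and a general squeezed knot need not admit such a cobordism in the useful direction. It also gives no control over how the $r_0$-values of $S^3_{1/n}(K)$ for different $n$ compare with each other, which is exactly what independence requires. Your Step 3 relies on explicit Chern--Simons levels (Furuta's $1/(4a_1\cdots a_n)$ bookkeeping), and these are unavailable for a general squeezed knot. So, as written, neither nontriviality of a single $\Omega([S^3_{1/n}(K)])$ nor independence of the family is established.

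\paragraph{What the paper uses instead.} The paper proceeds in four steps.
(i) From $\widetilde{s}(K)>0$ it gets $h(S^3_{+1}(K),U)<0$ for the Fr{\o}yshov-type invariant of \cite{DS19}. This invariant factors through the local class, so $\mathfrak{E}_{Y_1}$ is nontrivial. By Lemma~\ref{lem:Omega-detected-by-R}, $r_0(Y_1)<\infty$ or $r_0(-Y_1)<\infty$.
(ii) The $+1$-trace of $K$ is simply connected and positive definite. Reversing it, removing a ball, and adding the trivial annulus gives a strong height zero pair from $(S^3,U)$ to $(-Y_1,U)$. This forces $r_0(-Y_1)=\infty$, hence $r_0(Y_1)<\infty$.
(iii) The Nozaki--Sato--Taniguchi cobordism argument between successive surgeries gives $\infty>r_0(Y_1)>r_0(Y_2)>\cdots$ and $r_0(-Y_n)=\infty$ for all $n$.
(iv) The connected-sum inequality for $r_0$ then forces $r_0(Y)<\infty$ or $r_0(-Y)<\infty$, according to the sign of the coefficient of the largest $n$.

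In particular, your closing worry is misplaced: strict inequalities together with the connected-sum inequality suffice, and no exact filtration levels are needed.

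\paragraph{Seifert case.} Your plan to redo Fintushel--Stern and Furuta directly for $(\Sigma,U)$ at $\omega=1/4$ is plausible, but as stated it is only a hope. The paper reduces it to a citation. It uses the Nozaki--Sato--Taniguchi invariant $r_0^{\mathrm{NST}}$, for which finiteness of $r_0^{\mathrm{NST}}(Y)$ or $r_0^{\mathrm{NST}}(-Y)$ on Furuta's combinations is already proved. It then transfers this via the comparison $r_0(M)\le 2\,r_0^{\mathrm{NST}}(M)$ from \cite{DISST}, which comes from the disk cobordism capping off $U$.
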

We remark that Nozaki, Sato, and the third author~\cite[Theorem~1.5]{NST24}
constructed homology $3$--spheres $Y$ with $\Omega([Y])\neq [0]$ which do not bound
any definite $4$--manifold. Thus Auckly's method, which starts from a homology
sphere admitting a simply connected definite filling with non-diagonalizable
intersection form, would not apply to these examples.

We close the introduction by noting that our knot and homology $3$--sphere examples can be chosen prime and irreducible, respectively. In fact, the targets in both settings can be chosen hyperbolic.

\begin{rmk}
Silver--Whitten \cite[Theorem~2.2]{Silver-Whitten:2005} showed that
every knot admits ribbon concordances to hyperbolic knots of arbitrarily
large volume. Thus, if $\Omega^{\mathfrak E,\omega}([K])\neq[0]$, applying
Theorem~\ref{thm:main1} to the resulting ribbon concordances from
$K\mathbin{\#}-K$ gives infinitely many hyperbolic slice knots, each
of which cannot be unknotted by null-homologous twists all of the same
sign.

Similarly, let $Y$ be a homology $3$--sphere with $\Omega([Y])\neq[0]$.
Myers' theorem \cite[Theorem~1.1]{Myers:1993}, together with
Thurston's hyperbolic Dehn surgery theorem, gives, for infinitely many
framings, ribbon homology cobordisms from $Y\mathbin{\#}-Y$ to
hyperbolic homology $3$--spheres $M_i$; see also
\cite[proof of Corollary~4]{Aceto-Daemi-Hom-Lidman-Park:2022}. Theorem~1.5 therefore implies that $M_i$ cannot be obtained by surgery on a knot in $S^3$ for any $i$.
\end{rmk}

\subsection*{Acknowledgements} 
We would like to thank Peter Feller, Lukas Lewark, and Charles Livingston for
helpful comments. The first author was partially supported by the Samsung
Science and Technology Foundation (SSTF-BA2102-02), the NRF grant
RS-2025-00542968, and the Jang Young Sil Fellowship from KAIST. The second
author was partially supported by the Samsung Science and Technology Foundation
(SSTF-BA2102-02) and the NRF grant RS-2025-00542968. The third author was
partially supported by JSPS KAKENHI Grant Number 22K13921.

\section{Null-homologous disks with infinite cyclic complements}

We begin with the following proposition, which provides the key topological input for our arguments. For closely related constructions, see \cite[Section~4]{ConwayDaiMiller}; compare also \cite[Theorem~1.5]{ConwayMiller}.

\begin{prp}\label{prp:positive-twists-disk}
Suppose that $K$ can be transformed into the unknot by a sequence of $n$
positive null-homologous twists. Then $K$ bounds a smooth properly embedded
null-homologous disk
\[
D\subset \bigl(\#_n\overline{\mathbb{CP}}^{,2}\bigr)^\circ
\]
whose complement has infinite cyclic fundamental group. Moreover, the same
conclusion holds for any knot $J$ admitting a ribbon concordance to $K$.
\end{prp}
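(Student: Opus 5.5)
Each positive null-homologous twist will be realized as a blow-up of a $4$--manifold, the resulting twisting curves will be used to build an annulus cobordism, and the disk will be obtained by capping off the unknot and then gluing on the ribbon concordance.

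\textbf{Step 1: the cobordism.} Let $K=K_0,K_1,\dots,K_n=U$ be the sequence of knots, where $K_{i}$ is obtained from $K_{i-1}$ by $+1$--surgery on an unknotted curve $C_i$ with $\operatorname{lk}(C_i,K_{i-1})=0$. Performing $+1$--surgery on an unknot in $S^3$ returns $S^3$. Attaching a $+1$--framed $2$--handle along $C_i$ to $S^3\times[0,1]$ gives a cobordism from $S^3$ to $S^3$ that is diffeomorphic to a punctured $\mathbb{CP}^2$ with a ball removed. With the orientation convention in which the twist-sign and the blow-up sign match up to orientation reversal, this piece is a punctured $\overline{\mathbb{CP}}^{2}$. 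The product annulus $K_{i-1}\times[0,1]$ misses $C_i$, so it survives in this cobordism. Its other end is the image of $K_{i-1}$ in the new $S^3$, which is $K_i$.

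Because $\operatorname{lk}(C_i,K_{i-1})=0$, the annulus is null-homologous rel boundary: it has zero intersection with the core of the $2$--handle, capped by a Seifert disk of $C_i$. Stacking the $n$ pieces gives a cobordism $W\cong(\#_n\overline{\mathbb{CP}}^{2})\setminus(B^4\sqcup B^4)$ containing an annulus $A$ from $K$ to $U$. Capping $U$ with a standard disk in a $4$--ball, viewed from the appropriate side, gives the null-homologous disk $D\subset(\#_n\overline{\mathbb{CP}}^{2})^\circ$ bounded by $K$.

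\textbf{Step 2: the fundamental group.} This is the main point to check. The complement of the capped unknot has group $\mathbb Z$, generated by a meridian. By van Kampen, adding each $2$--handle piece away from the annulus attaches a $2$--cell along $C_i$ in the complement of $K_{i-1}\times\{1\}$. So $\pi_1$ of the disk complement is $\mathbb Z$ modulo the classes of the curves $C_i$. Each $C_i$ is null-homologous in the knot complement, and any element of $\mathbb Z$ that is null-homologous is trivial. Hence the relations kill nothing, and $\pi_1(\text{complement})\cong\mathbb Z$.

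More carefully, I would build the disk from the unknot end, so the $2$--handles attach to complements whose $\pi_1$ is already $\mathbb Z$. The dual handle decomposition, read from the $U$ side, adds $2$--handles along curves that are still null-homologous in the knot complements. Since the abelianization of $\mathbb Z$ is $\mathbb Z$ itself, each such curve is trivial in $\pi_1$, and attaching the $2$--handle does not change $\pi_1$. Tracking this correctly through the handle decomposition is where the care is needed.

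\textbf{Step 3: the ribbon concordance.} Given a ribbon concordance $R\subset S^3\times[0,1]$ from $J$ to $K$, glue it to $D$ to get a disk bounded by $J$ in the same $4$--manifold; it is still null-homologous. Turned upside down, $R$ has only local maxima and saddles. So, seen from $D$'s side, the complement of $R$ is built from the complement of $K$ by adding $1$--handles, one per local minimum, and $2$--handles, one per band. Gordon's lemma says $\pi_1(S^3\setminus J)\to\pi_1(S^3\times I\setminus R)$ is injective and $\pi_1(S^3\setminus K)\to\pi_1(S^3\times I\setminus R)$ is surjective. Using the latter, $\pi_1$ of the new disk complement is the pushout of $\pi_1(S^3\times I\setminus R)$ and $\mathbb Z$ over $\pi_1(S^3\setminus K)$. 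Since $\pi_1(S^3\setminus K)\to\pi_1(S^3\times I\setminus R)$ is surjective, this pushout is a quotient of $\mathbb Z$. It surjects onto $\mathbb Z$ via the abelianization, which is $H_1=\mathbb Z$ by Alexander duality. Hence it is exactly $\mathbb Z$.
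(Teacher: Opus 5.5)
Your proposal is correct and follows essentially the paper's route. Build the disk from the unknot end by attaching $(-1)$--framed $2$--handles along the dual curves; these have linking number zero with the current knot, so they are trivial in $\pi_1=\mathbb{Z}$. Your ``more careful'' version of Step~2 is exactly the paper's argument, and the first version, which attaches along the $C_i$ themselves to the unknot complement, should be discarded. For the ribbon concordance, the new complement only acquires relations over the old one while $H_1$ stays $\mathbb{Z}$, which forces the group to be $\mathbb{Z}$.

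One slip to fix in Step~3: seen from $K$'s side, the concordance complement is obtained from $(S^3\smallsetminus K)\times I$ by attaching $2$--handles (saddles) and $3$--handles (local maxima), not $1$-- and $2$--handles. It is exactly this absence of $1$--handles that gives the surjectivity you quote from Gordon, so your conclusion is unaffected.
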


\begin{proof}
Reverse the given sequence of twists. Starting with the unknot, we obtain $K$
by successively attaching $n$ $(-1)$--framed $2$--handles along unknotted
curves in the complement of the current knot. Each attaching curve has linking
number zero with the current knot. Capping the initial unknot with the standard
disk in $B^4$ and following it through these cobordisms gives a smooth properly
embedded disk
\[
D\subset \bigl(\#_n\overline{\mathbb{CP}}^{\,2}\bigr)^\circ
\]
with boundary $K$. The algebraic intersection of $D$
with the homology class associated to each $2$--handle is the linking number of its attaching circle with the current knot. These linking numbers all
vanish, and thus $D$ is null-homologous.

We next compute the fundamental group of the complement. The complement of the standard slice disk for the unknot has fundamental group $\mathbb{Z}$, generated by a meridian. At each stage, attaching a $2$--handle adds the relation represented by its attaching circle. Since this circle has linking number zero with the current knot, this relation is trivial. Therefore, the resulting disk complement has infinite cyclic fundamental group.

Now suppose that there is a ribbon concordance from $J$ to $K$. Concatenating it with $D$ gives a properly embedded null-homologous disk with boundary $J$ in the same $4$--manifold. Moreover, by assumption, the complement of this disk is obtained from the original disk complement by attaching only $2$-- and $3$--handles. By Mayer--Vietoris, its first homology is $\mathbb{Z}$. Since the fundamental group before these handle attachments is infinite cyclic and the first homology remains isomorphic to $\mathbb Z$, each attaching circle represents the trivial element of the fundamental group. Hence the resulting disk complement also has infinite cyclic fundamental group.
\end{proof}

\begin{rmk}\label{rmk:ribbon-fillings}
There is an analogous observation for homology $3$--spheres. Suppose that
there is a ribbon homology cobordism from $Y$ to $M$ and that $M$ bounds a
simply connected smooth $4$--manifold. Gluing the reverse of the cobordism to
this filling produces a simply connected filling of $Y$. Indeed, viewed from
$M$, the cobordism has only $2$-- and $3$--handles, which cannot create
generators of the fundamental group. The resulting filling also has the same
intersection form as the original one; in particular, definiteness is
preserved. This is the same handle argument used in the proof of
Proposition~\ref{prp:positive-twists-disk}.
\end{rmk}

\section{Constraint from singular instanton theory}

Let $Y$ be an oriented homology $3$--sphere, and let $K \subset Y$ be an oriented knot with a positively oriented meridian $\mu_K$. Equivariant singular instanton Floer theory, developed in \cite{DS19,Ha21,DISST}, is, roughly speaking, an equivariant Morse--Floer theory for the Chern--Simons functional with prescribed meridional holonomy. To specify this holonomy, we fix a parameter $\omega \in (0,\frac{1}{2})$ such that
$$
\Delta_{Y,K}(e^{4\pi i\omega}) \neq 0,
$$
where $\Delta_{Y,K}(t)$ denotes the Alexander polynomial of the knot $K \subset Y$. 
In particular, this non-root condition is automatically satisfied whenever
\[
\omega=\frac{m}{2p^n}\in\left(0,\frac12\right),
\]
where $m\in\mathbb Z$, $n\in\mathbb Z_{> 0}$, and $p$ is prime; see
\cite[Section~7.2]{DISST}.\footnote{To avoid roots of Alexander polynomials for all knots, we require the numerator in \cite[Section~7.2]{DISST} to be even.}
The relevant character variety is
$$
\chi_{\omega}(Y,K)
:=
\left\{
\rho\colon \pi_1(Y\smallsetminus K)\longrightarrow SU(2)
\ \middle|\
\rho(\mu_K)\sim
\begin{pmatrix}
e^{2\pi i\omega} & 0\\
0 & e^{-2\pi i\omega}
\end{pmatrix}
\right\}\bigg/SU(2),
$$
where $\sim$ denotes conjugacy in $SU(2)$ and the quotient is taken with respect to the overall conjugation action. Thus, after suitable perturbations, the generators of the corresponding Floer complex are modeled by the critical points represented by elements of $\chi_{\omega}(Y,K)$. The construction associates to $(Y,K)$ a graded $\mathcal{S}$--complex
$$
\bigl(\widetilde{C}^{\omega}_{*}(Y,K),\widetilde{d}\bigr).
$$
Its coefficients may be taken over a ring $R_{\omega}$ depending on the holonomy parameter $\omega$. More precisely,
$$
R_{\omega}
=
\begin{cases}
\mathbb{Z}[T^{-1},T]\!][U^{\pm1}], & \omega\neq\frac14,\\[3pt]
\mathbb{Z}[T^{\pm1},U^{\pm1}], & \omega=\frac14.
\end{cases}
$$

The $\mathcal{S}$--complexes admit cobordism maps induced by suitable cobordisms of pairs. Let $W$ be a smooth compact connected cobordism from a homology $3$--sphere $Y$ to a homology $3$--sphere $Y'$, and let $S\subset W$
be a smoothly and properly embedded oriented surface from $K\subset Y$
to $K'\subset Y'$. It is called \emph{negative definite of strong height $i$} with respect to the holonomy parameter $\omega$ if
\begin{itemize}
\item $b_1(W)=b^+(W)=0$;
\item the minimal index of a reducible ASD connection is $2i-1$;
\item the associated element $\eta_{\omega}(W,S)$ is invertible in $R_{\omega}$.
\end{itemize}
Here, the index of the reducible connection $A_L$ associated with a splitting $E=L\oplus L^*$ into complex line bundles is given by
$$
\begin{aligned}
\operatorname{ind}(A_L)
={}&
8\kappa(A_L)
+2(1-4\omega)\nu(A_L)
-\frac{3}{2}\bigl(\chi(W)+\sigma(W)\bigr)
+\chi(S)
\\
&\quad
+8\omega^2 S\cdot S
+\sigma_\omega(Y,K)
-\sigma_\omega(Y',K')
-1,
\end{aligned}
$$
where $\kappa(A_L)$ is the topological energy
$$
\kappa(A_L)=-\left(c_1(L)+\omega S\right)^2,
$$
and $\nu(A_L)$ is the monopole number
$$
\nu(A_L)=-2c_1(L)\cdot S.
$$
We denote by $\kappa_{\min}^{\omega}(W,S)$ the minimal value of $\kappa(A_L)$ among reducibles $A_L$ of index $2i-1$.

A lift of the Chern--Simons functional to $\mathbb{R}$ induces a real filtration on the $\mathcal{S}$--complex. This filtered complex is referred to as the enriched $\mathcal{S}$--complex, which we denote by $\mathfrak{E}^{\omega}(Y,K)$. 
We use the notion of a \textit{local morphism} between enriched $\mathcal S$-complexes, as defined in \cite[Definition~5.5 (i) and (ii)]{DISST}. In our applications, these morphisms are induced by negative-definite cobordisms of strong height $0$ satisfying $\kappa_{\min}^{\omega}=0$. Two enriched $\mathcal{S}$--complexes are said to be \textit{locally equivalent} if there exist local morphisms in both directions between them. 
Relaxing the homotopy coherence conditions relating the sequences of local maps leads to the notion of a \textit{weak local morphism}, which corresponds to dropping condition~(ii) in \cite[Definition~5.5]{DISST}. Two enriched $\mathcal S$-complexes are \textit{weakly locally equivalent} if there exist weak local morphisms in both directions between them.
Throughout this paper, we work with the weak local equivalence class of the enriched $\mathcal S$-complex $\mathfrak E^\omega(Y,K)$, rather than its local equivalence class. Accordingly, $\Omega^{\mathfrak E,\omega}$ denotes the resulting invariant with values in weak local equivalence classes.

Following \cite{DISST}, the enriched $\mathcal{S}$--complex $\mathfrak{E}^{\omega}(Y,K)$ gives rise to a numerical invariant
$$
\mathcal{N}^{\omega}_{(Y,K)}
\colon
\mathbb{Z}\times[-\infty,0)
\longrightarrow
[0,\infty].
$$
We shall mainly use the $r_0$--invariant extracted from $\mathcal{N}^{\omega}_{(Y,K)}$. Following \cite[(99)]{DISST}, define its transpose by
$$
\mathcal{N}^{\omega,\top}_{(Y,K)}(k,r)
:=
\min\left\{
\inf\left\{
s\in[-\infty,0)
\ \middle|\
\mathcal{N}^{\omega}_{(Y,K)}(k,s)\leq r
\right\},
\,0
\right\}
$$
for $r<\infty$, and set
$$
\mathcal{N}^{\omega,\top}_{(Y,K)}(k,\infty)
:=
\lim_{r\to\infty}
\mathcal{N}^{\omega,\top}_{(Y,K)}(k,r).
$$
For $s\in[-\infty,0]$, define
\begin{align}\label{equation_r_s}
r_s^{\omega}(Y,K)
:=
-\mathcal{N}^{\omega,\top}_{(Y,K)}(0,-s).
\end{align}
In particular,
$$
r_0^{\omega}(Y,K)
=
-\mathcal{N}^{\omega,\top}_{(Y,K)}(0,0)
\in[0,\infty].
$$
When $Y=S^3$, we abbreviate
$$
r_s^{\omega}(K)
:=
r_s^{\omega}(S^3,K).
$$
By construction, these invariants depend only on the weak local equivalence class of the enriched $\mathcal{S}$--complex.

The following lemma will be used repeatedly. For an oriented knot $K\subset Y$, let $-Y$ denote $Y$ with its orientation reversed and let $-K\subset -Y$ denote $K$ with its orientation reversed. The pair $(-Y,-K)$ represents the inverse of $(Y,K)$ in the homology knot concordance group. For an enriched $\mathcal{S}$--complex $\mathfrak{E}$, we denote its dual enriched $\mathcal{S}$--complex by $\mathfrak{E}^{\dagger}$. In particular, $(\mathfrak{E}^{\omega}(Y,K))^{\dagger}$ is locally equivalent to $\mathfrak{E}^{\omega}(-Y,-K)$.

\begin{lem}[{\cite[Proposition~7.13]{DISST}}]\label{lem:Omega-detected-by-R}
An enriched $\mathcal{S}$--complex $\mathfrak{E}^{\omega}(Y,K)$ is weakly locally equivalent to the trivial enriched $\mathcal{S}$--complex if and only if
$$
r_0^{\omega}\bigl(\mathfrak{E}^{\omega}(Y,K)\bigr)=\infty
\qquad\text{and}\qquad
r_0^{\omega}\bigl((\mathfrak{E}^{\omega}(Y,K))^{\dagger}\bigr)=\infty.
$$
\end{lem}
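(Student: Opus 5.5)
This lemma is quoted from \cite[Proposition~7.13]{DISST}, so the plan is to reconstruct the argument there in the weak local setting rather than prove something new. The approach is to translate the vanishing of $r_0$ into the existence of a weak local morphism from the trivial complex, and then dualize to get a morphism in the other direction. The key identification is the following: $r_0^{\omega}(\mathfrak{E})=\infty$ means that for every filtration level $s<0$ there is a cycle $\alpha$ in the $\mathcal{S}$--complex with degree $0$ and filtration at least $s$ whose image under the reducible map $\delta_1$ is $1$, up to the $U$-tower corrections encoded in $\mathcal N$. Equivalently, there is a chain map $\mathfrak{E}_{\mathrm{triv}}\to\mathfrak{E}$ respecting gradings, sending the reducible generator to the reducible generator, and shifting the filtration by an arbitrarily small amount.

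\textbf{Step 1 (easy direction).} Suppose $\mathfrak E$ is weakly locally equivalent to the trivial complex. Since $r_0$ depends only on the weak local equivalence class, and the trivial complex has $r_0=\infty$, we get $r_0(\mathfrak E)=\infty$. Dualizing a weak local equivalence gives a weak local equivalence $\mathfrak E^\dagger\simeq\mathfrak E_{\mathrm{triv}}^\dagger=\mathfrak E_{\mathrm{triv}}$, so $r_0(\mathfrak E^\dagger)=\infty$ as well.

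\textbf{Step 2 (unpack the definition of $\mathcal N$).} Recall from \cite[(99)]{DISST} how $\mathcal N(0,s)$ is defined. It measures the minimal $\mathfrak{I}$-filtration defect of an element realizing $\delta_1$-type nontriviality in degree $0$ at level $s$. Then check that $\mathcal N^\top(0,0)=-\infty$ exactly says: for every $\epsilon>0$ there is a weak local morphism $\mathfrak E_{\mathrm{triv}}\to\mathfrak E$ with filtration shift at most $\epsilon$. Because the filtration is real-valued and the complex is finitely generated over $R_\omega$, the set of achievable filtration levels is discrete. So for small $\epsilon$ this yields an honest level-preserving weak local morphism. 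This discreteness/compactness step, which passes from "arbitrarily small defect" to "no defect", is the one I expect to be the main obstacle. It uses that the critical values of the Chern--Simons functional form a finite set modulo the period lattice determined by $\omega$, and, for $\omega\neq 1/4$, completeness of $\mathbb Z[T^{-1},T]\!]$ to pass to a limit if needed.

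\textbf{Step 3 (dualize).} Applying Step 2 to $\mathfrak E^\dagger$ gives a weak local morphism $\mathfrak E_{\mathrm{triv}}\to\mathfrak E^\dagger$. Dualizing it gives a weak local morphism $\mathfrak E\to\mathfrak E_{\mathrm{triv}}$, using that dualization reverses local morphisms and fixes the trivial complex. Combined with Step 2 applied to $\mathfrak E$ itself, this gives weak local morphisms in both directions. Hence $\mathfrak E$ is weakly locally equivalent to the trivial complex. Since we dropped condition~(ii) of \cite[Definition~5.5]{DISST}, no homotopy-coherence checks between the sequences of maps are needed, which is precisely why the statement is phrased for weak equivalence.
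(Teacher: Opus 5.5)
Your outer structure is right. The ``only if'' direction is invariance of $r_0$ under weak local equivalence together with $\mathfrak{E}_{\mathrm{triv}}^{\dagger}=\mathfrak{E}_{\mathrm{triv}}$. The ``if'' direction comes from a one-sided statement ($r_0(\mathfrak{E})=\infty$ yields a weak local morphism $\mathfrak{E}_{\mathrm{triv}}\to\mathfrak{E}$), applied to $\mathfrak{E}$ and to $\mathfrak{E}^{\dagger}$ and followed by dualization. The paper gives no argument and simply quotes \cite[Proposition~7.13]{DISST}. However, Step~2, which carries all the content, is set up incorrectly.

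\emph{The witness is wrong.} Use the conventions $\widetilde{C}=C\oplus C[-1]\oplus R$ and $\chi(a,b,c)=(0,a,0)$. A morphism out of the trivial complex must commute with $\chi$, so it has no component in the first copy of $C$. It is therefore $1\mapsto(0,\alpha,1)$ (up to a unit) with $d\alpha=\delta_2(1)$, i.e.\ a small-level primitive of $\delta_2(1)$. A cycle $\alpha\in C$ with $\delta_1(\alpha)=1$ is the opposite kind of object. A local map $\widetilde{C}\to R$ forces $\delta_1$ to be a coboundary, so such a cycle is exactly the obstruction to a local map $\mathfrak{E}\to\mathfrak{E}_{\mathrm{triv}}$; it is a Fr{\o}yshov-type witness of nontriviality.

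\emph{You also collapse two quantifiers.} In $\mathcal{N}(0,s)$ the parameter $s$ truncates the complex; it is not the filtration defect. $r_0=\infty$ says that $\mathcal{N}(0,s)=0$ for arbitrarily negative $s$, and each such value is already an infimum over levels. So any genuine limiting argument concerns $s\to-\infty$, not the level. That is where completeness of $\mathbb{Z}[T^{-1},T]\!]$ could legitimately enter.

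\emph{The discreteness step aims at something you neither need nor can get.} An enriched $\mathcal{S}$--complex is a sequence of filtered $\mathcal{S}$--complexes $\widetilde{C}_i$ built from different holonomy perturbations. Its numerical invariants are obtained by passing to the limit along this sequence. A weak local morphism in the sense of \cite[Definition~5.5(i)]{DISST} is a sequence of local maps whose levels tend to $0$; no exactly level-preserving map is asked for.

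``Arbitrarily small defect'' for $\mathfrak{E}$ thus means defects $\epsilon_i\to0$ as $i\to\infty$. Suppose the achievable levels in each $\widetilde{C}_i$ do form a discrete set. That set still moves with $i$, since perturbed critical values are only close to the unperturbed ones; your finiteness of critical values modulo periods does not hold uniformly across stages. So discreteness cannot upgrade $\epsilon_i$ to $0$ at any stage.

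The fix is to drop this step. The level-$\epsilon_i$ local maps $R\to\widetilde{C}_i$ obtained by unwinding $\mathcal{N}$ already form a weak local morphism, reindexed along the structure maps of $\mathfrak{E}$ if the definition prescribes the level sequence. As you note, dropping condition~(ii) is what makes this sufficient for weak equivalence.
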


We next record some basic classes for which this nontriviality
condition holds.
\begin{prp}\label{prp:basic-nontrivial-Omega}
Let $K$ be a knot in $S^3$. Suppose that $K$ is one of the following:
\begin{itemize}
\item A knot representing an element of infinite order in the algebraic knot concordance group;
\item A squeezed knot with nonzero Rasmussen invariant $s(K)$.
\end{itemize}
Then there exists $\omega\in(0,\frac12)$ such that
$$
\Omega^{\mathfrak{E},\omega}([K])\neq[0].
$$
In the second case, one may take $\omega=\frac14$. Moreover, for such a choice of $\omega$, at least one of
$$
r_0^{\omega}(K),
\qquad
r_0^{\omega}(-K)
$$
is finite.
\end{prp}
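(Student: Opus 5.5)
The plan is to establish the nontriviality of $\Omega^{\mathfrak E,\omega}([K])$ by importing existing results from \cite{DISST} and the literature on instanton invariants of squeezed knots, and then to derive the $r_0$ statement formally from Lemma~\ref{lem:Omega-detected-by-R}. The last assertion is immediate once nontriviality is known. If $\Omega^{\mathfrak E,\omega}([K])\neq[0]$, then $\mathfrak E^\omega(S^3,K)$ is not weakly locally equivalent to the trivial complex. Lemma~\ref{lem:Omega-detected-by-R}, combined with the local equivalence $(\mathfrak E^\omega(S^3,K))^\dagger\simeq\mathfrak E^\omega(S^3,-K)$, then forces $r_0^\omega(K)<\infty$ or $r_0^\omega(-K)<\infty$. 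Since $r_0$ depends only on the weak local equivalence class, this step is purely formal. So everything reduces to proving nontriviality in the two cases.

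For the first case, I would use the signature-type invariants that \cite{DISST} extracts from the enriched complex. The key point is that the grading shift of the reducible, which is the Levine--Tristram signature $\sigma_{\omega}(K)$ with $\omega$ in the admissible set \eqref{admissible}, is an invariant of weak local equivalence: a weak local morphism must preserve the reducible's grading up to the definite-cobordism constraint. Hence a nonzero $\sigma_\omega(K)$ forces $\Omega^{\mathfrak E,\omega}([K])\neq[0]$. It remains to produce such an $\omega$. A knot of infinite order in Levine's algebraic concordance group has some jump of its Levine--Tristram signature function at a root $e^{2\pi i\theta}$ of the Alexander polynomial on the unit circle; equivalently, the averaged signatures at such roots do not all vanish. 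The signature function is locally constant away from roots, and the admissible parameters (with $e^{4\pi i\omega}$ as the evaluation point) are dense. Therefore one can pick an admissible $\omega$ on a nonzero plateau, and $\sigma_\omega(K)\neq0$. The detail to check is the factor of two between $\omega$ and the evaluation point $e^{4\pi i\omega}$; it does not affect density.

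For the second case, with $\omega=1/4$, I would cite the result, in the literature on the instanton invariants of \cite{DISST}, that squeezed knots have their $\omega=1/4$ enriched complex locally determined by $s(K)$. The mechanism is that a squeezed knot sits in a genus-minimizing cobordism between a positive and a negative torus knot, and the invariant $\Gamma$ or $r_0$ is monotone under such cobordisms, with the torus-knot values computed explicitly. This yields $r_0^{1/4}(K)<\infty$ when $s(K)>0$ (and dually for $-K$ when $s(K)<0$), and hence nontriviality by Lemma~\ref{lem:Omega-detected-by-R}.

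The main obstacle is this second case: pinning down exactly which published statement gives $r_0^{1/4}(K)<\infty$ for squeezed knots with $s\neq0$. The first thing I would try is to reproduce the argument directly. Take a quasipositive surface for a positive torus knot $T$ with $r_0^{1/4}(T)<\infty$. Then use the squeezing cobordism from $T$ to $K$ of genus $g(T)-g_4(K)$, together with the cobordism inequality for $r_s$ from \cite{DISST}, which bounds $r$ across negative definite cobordisms of strong height $0$, to transfer finiteness to $K$.
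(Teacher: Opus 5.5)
Your treatment of the final assertion (Lemma~\ref{lem:Omega-detected-by-R} together with $(\mathfrak E^\omega_K)^\dagger\simeq\mathfrak E^\omega_{-K}$) is fine. So is your choice of an admissible $\omega$ with $\sigma_K(e^{4\pi i\omega})\neq0$, which you argue more carefully than the paper does.

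The gap in the first case is the step from $\sigma_\omega(K)\neq0$ to nontriviality. There is no ``grading of the reducible'' equal to $\sigma_\omega(K)$ that weak local morphisms are forced to preserve. Weak local morphisms are defined purely algebraically, with the reducible summand $R$ in a fixed degree on both sides, so nothing in the definition sees $\sigma_\omega(K)$. The signature enters only through the reducible index of a cobordism of pairs, namely the term $\sigma_\omega(Y,K)-\sigma_\omega(Y',K')$ in the index formula. It therefore constrains which cobordisms induce local maps, not the abstract weak local class, and your ``key point'' is really the conclusion itself. The paper closes this step with an actual theorem, \cite[Corollary~7.8]{DISST} (see also Proposition~7.9 there). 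For such $\omega$, the Fr{\o}yshov invariant of the $\omega$-holonomy complex is nonzero. Since that invariant factors through local equivalence of the underlying $\mathcal S$-complex, $\mathfrak E^\omega_K$ is not weakly locally equivalent to the trivial complex.

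In the second case the missing ingredient is the invariant $\widetilde s$ of \cite{DISST}, which is extracted from $\Omega^{\mathfrak E,1/4}$ after forgetting the filtration. Since $2\widetilde s$ is a slice-torus invariant \cite[Theorem~4.24]{DISST} and slice-torus invariants agree on squeezed knots \cite{FellerLewarkLobbSqueezed}, one gets $\widetilde s(K)=s(K)/2\neq0$. Then \cite[Theorem~5.56]{DISST} converts this into finiteness of $r_0^{1/4}(K)$ or $r_0^{1/4}(-K)$. The genus control lives in $\widetilde s$, not in $r_0$. Your assertion that the enriched complex of a squeezed knot is locally determined by $s(K)$ is much stronger than anything available or needed here.

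Your fallback fails for the same reason. Theorem~\ref{thm:cob_ineq} requires strong height $0$. A genus-$g$ surface in $[0,1]\times S^3$ has reducible index $-2g+\sigma_\omega(K_{\mathrm{in}})-\sigma_\omega(K_{\mathrm{out}})-1$, which need not be $-1$ for a squeezing cobordism. In fact $r_0$ cannot satisfy $r_0(K_{\mathrm{in}})\leq r_0(K_{\mathrm{out}})$ across arbitrary genus cobordisms. Otherwise, punctured Seifert surfaces from the unknot $U$ to $T$ and to $-T$ would give $r_0(T)=r_0(-T)=\infty$ for every knot $T$, since $r_0(U)=\infty$. Every class would then be trivial by Lemma~\ref{lem:Omega-detected-by-R}. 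Note also that the inequality propagates finiteness from the outgoing end to the incoming end. So a cobordism from $T$ to $K$ could not transfer finiteness from $T$ to $K$ even at height $0$.
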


\begin{proof}
We first suppose that $K$ has infinite order in the algebraic knot concordance group. Then there exists $\omega\in(0,\frac12)$ such that $e^{4\pi i\omega}$ is not a root of $\Delta_K(t)$ and
$$
\sigma_K(e^{4\pi i\omega})\neq0.
$$
For this value of $\omega$, \cite[Corollary~7.8]{DISST} shows that the Fr{\o}yshov invariant of the $\omega$--holonomy complex is nonzero; see also \cite[Proposition~7.9]{DISST}. Hence the enriched $\mathcal{S}$--complex $\mathfrak{E}^{\omega}_K$ is not weakly locally equivalent to the trivial enriched complex. Therefore,
$$
\Omega^{\mathfrak{E},\omega}([K])\neq[0].
$$

We next consider the case of squeezed knots. Recall that the concordance homomorphism
\[
\widetilde{s} \colon \mathcal{C} \to \Z
\]
given in \cite{DISST} is defined using the traceless meridional holonomy condition, corresponding to $\omega=\frac14$. Since slice-torus invariants agree for squeezed knots~\cite{FellerLewarkLobbSqueezed}, and since $2\widetilde{s}$ is a slice-torus invariant by \cite[Theorem~4.24]{DISST}, we have
$$
\widetilde{s}(K)
=
\frac{s(K)}{2}
\neq0.
$$
Since the invariant $\widetilde{s}$ is defined in terms of $\Omega^{\mathfrak{E},\frac{1}{4}}$ forgetting the filtration structure, it follows from \cite[Theorem~5.56]{DISST} that at least one of
$$
r_0^{1/4}(K),
\qquad
r_0^{1/4}(-K)
$$
is finite. Therefore, Lemma~\ref{lem:Omega-detected-by-R} implies that
$$
\Omega^{\mathfrak{E},1/4}([K])\neq[0].
$$
Finally, the assertion concerning $r_0^{\omega}$ follows directly from Lemma~\ref{lem:Omega-detected-by-R}, completing the proof.
\end{proof}

The behavior of the Chern--Simons filtration under negative definite cobordisms gives the following monotonicity property, which is the general-holonomy analogue of \cite[Corollary~5.54]{DISST}.

\begin{thm}[{\cite[Theorem~7.10]{DISST}}]\label{thm:cob_ineq}
Let $K\subset Y$ and $K'\subset Y'$ be knots in homology $3$--spheres, and suppose that
$$
\Delta_{Y,K}(e^{4\pi i\omega})\neq0,
\qquad
\Delta_{Y',K'}(e^{4\pi i\omega})\neq0.
$$
Let $W$ be a smooth compact connected cobordism from $Y$ to $Y'$, and
let $S\subset W$ be a smoothly and properly embedded oriented surface
from $K$ to $K'$. Suppose that $(W,S)$ is negative definite of strong
height $0$ with respect to the holonomy parameter $\omega$. Then, for every $s\in[-\infty,0]$,
$$
r_s^\omega(Y,K)
\leq
r_{s-2\kappa_{\min}^\omega(W,S)}^\omega(Y',K')
+
2\kappa_{\min}^\omega(W,S).
$$
Moreover, suppose that
$$
r_s^\omega(Y,K)<\infty,
\qquad
\kappa_{\min}^\omega(W,S)=0,
$$
and that equality holds. Then there exists an irreducible $SU(2)$--representation
$$
\rho\colon
\pi_1(W\smallsetminus S)
\longrightarrow
SU(2)
$$
such that the image of a meridian of $S$ lies in the conjugacy class of
$$
\begin{pmatrix}
e^{2\pi i\omega} & 0\\
0 & e^{-2\pi i\omega}
\end{pmatrix}.
$$
\end{thm}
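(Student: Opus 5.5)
The plan is to deduce the inequality from functoriality of the enriched $\mathcal S$--complexes under cobordisms, keeping track of the Chern--Simons filtration, and then to treat the equality case by an energy-gap argument. First, the pair $(W,S)$ is negative definite of strong height $0$ with respect to $\omega$. Counting singular ASD connections on $(W,S)$ with holonomy parameter $\omega$, in the usual way, should give a morphism of $\mathcal S$--complexes
\[
\widetilde\lambda_{(W,S)}\colon \widetilde C^\omega_*(Y,K)\to \widetilde C^\omega_*(Y',K').
\]
Its component on the reducible summand is multiplication by $\eta_\omega(W,S)$, which is invertible, so this is a local map in the sense of \cite[Definition~5.5]{DISST}. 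Only the weak version is needed here. The hypotheses $b_1=b^+=0$ and strong height $0$ ensure that the reducibles of minimal index $-1$ are unobstructed and control the $\mathbb Z$--graded part.

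Next I would track the filtration. By the energy identity, an instanton from a critical point $\alpha$ to $\alpha'$ through $(W,S)$ has energy $\mathrm{CS}(\alpha)-\mathrm{CS}(\alpha')+\kappa$, with $\kappa$ normalized by the reducible term $\kappa(A_L)$. The minimal value among index-$(-1)$ reducibles is $\kappa_{\min}^\omega(W,S)$. After rescaling, this should show that $\widetilde\lambda$ shifts the filtration by at most $2\kappa_{\min}^\omega(W,S)$. It is therefore a weak local morphism of level $2\kappa_{\min}$, in the terminology of \cite{DISST}.

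Now recall that $r_s$ is defined through $\mathcal N^{\omega,\top}(0,-s)$. This records the minimal filtration level of a chain realizing the "$0$-th" local relation, with the reducible component constrained in the filtration window $[s,0)$. Pushing such a chain forward by $\widetilde\lambda$ moves every level by at most $2\kappa_{\min}$. This yields $r_s^\omega(Y,K)\le r^\omega_{s-2\kappa_{\min}}(Y',K')+2\kappa_{\min}$, which is a purely algebraic consequence, exactly as in \cite[Corollary~5.54]{DISST}.

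For the equality statement, assume $\kappa_{\min}=0$, $r_s^\omega(Y,K)<\infty$, and equality holds. I would argue by contradiction: suppose that $\pi_1(W\smallsetminus S)$ admits no irreducible $SU(2)$ representation with meridional holonomy in the prescribed conjugacy class. Then there are no irreducible flat singular connections on $(W,S)$. By Uhlenbeck compactness, with small holonomy perturbations, every irreducible instanton with small ends has energy bounded below by some $\epsilon>0$. Consequently the irreducible parts of $\widetilde\lambda$ strictly decrease the filtration by at least $\epsilon$, up to an arbitrarily small perturbation error. The only level-preserving contributions are then reducible. One must check that these cannot realize the extremal chain: the reducible part is diagonal, given by $\eta$, and the chain computing $r_s$ is forced to have an irreducible component at the extremal level.

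Feeding this strict drop into the argument of the previous paragraph would give a strict inequality, contradicting the assumed equality. The main obstacle is this equality step. It requires uniformity of the energy gap under the perturbations used to define the complexes, and care that the infimum defining $r_s$ is attained. I would use the attainment results of \cite{DISST}: $r_s$ takes values in the critical values of the Chern--Simons functional, a discrete set. This lets one pass to the limit as the perturbation goes to zero.
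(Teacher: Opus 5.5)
Your outline follows the same route as the source. The paper gives no proof of its own: it quotes \cite[Theorem~7.10]{DISST} and notes that the inequality comes from the cobordism map being a weak local morphism of level $2\kappa_{\min}^\omega(W,S)$ (\cite[Theorem~5.50]{DISST}), while the equality clause is the standard energy-gap argument you sketch for the case where $(W,S)$ carries no irreducible flat $\omega$--singular connection. Two small refinements are worth making. First, state the uniform gap for instantons with at least one irreducible end; it is the nondegeneracy of the reducibles, guaranteed by $\Delta_{Y,K}(e^{4\pi i\omega})\neq0$ and $\Delta_{Y',K'}(e^{4\pi i\omega})\neq0$, that keeps perturbed irreducible critical points away from them in the compactness argument. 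Second, once the gap is uniform in the perturbation, you do not need attainment of the infimum defining $r_s^\omega$.
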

\begin{rmk}
There is a typo in \cite[Corollary~5.54]{DISST}, where the inequality is stated as
$$
r^{\frac{1}{4}}_{s-2\kappa_{\min}^{\frac{1}{4}}(W,S)}(Y,K)
\leq
r_{s}^{\frac{1}{4}}(Y',K')
+
2\kappa_{\min}^{\frac{1}{4}}(W,S).
$$
The correct inequality is
$$
r^{\frac{1}{4}}_{s}(Y,K)
\leq
r_{s-2\kappa_{\min}^{\frac{1}{4}}(W,S)}^{\frac{1}{4}}(Y',K')
+
2\kappa_{\min}^{\frac{1}{4}}(W,S),
$$
as follows from \cite[Theorem~5.50]{DISST}.
\end{rmk}

\section{Applications to unknotting and Dehn surgery}

\subsection{Obstructions to same-sign null-homologous twists}

Throughout this subsection, we specialize to knots in $S^3$ and use the abbreviations
$$
\mathfrak{E}^{\omega}_K
:=
\mathfrak{E}^{\omega}(S^3,K),
\qquad
r_s^\omega(K)
:=
r_s^\omega(S^3,K).
$$We remark that our proof extends to knots in a homology $3$--sphere $Y$ for which $Y\mathbin{\#}-Y$ bounds a simply connected definite $4$--manifold. For example, this applies whenever $Y$ bounds a contractible $4$--manifold, as is the case for many Brieskorn spheres~\cite{Akbulut-Kirby:1979, Casson-Harer:1981, Fickle:1984}. For simplicity, however, we state the result only for $Y=S^3$. The following proposition is the knot-theoretic obstruction which will be used
in the proof of Theorem~\ref{thm:main1}.

\begin{prp}\label{prp:annulus-cyclic-obstruction}
Let $K_1$ and $K_2$ be knots in $S^3$. Let $W$ be a smooth compact connected
negative-definite cobordism from $S^3$ to $S^3$, and let $A\subset W$ be a
smoothly and properly embedded null-homologous annulus from $K_1$ to $K_2$.
Suppose that
$
\chi(W)+\sigma(W)=0.
$
Suppose that $K_1$ and $K_2$ are concordant and that
$
r_0^\omega(K_1)<\infty
$
for some $\omega\in(0,\frac12)$. Then
$
\pi_1(W\smallsetminus A)
$
cannot be infinite cyclic.
\end{prp}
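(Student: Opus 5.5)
We argue by contradiction: assume $\pi_1(W\smallsetminus A)\cong\mathbb Z$, and apply Theorem~\ref{thm:cob_ineq} to the pair $(W,A)$ to force the equality case. We then obtain a contradiction from the existence of an irreducible representation.

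\emph{Step 1: strong height $0$ and $\kappa_{\min}^\omega(W,A)=0$.} Since $W$ is negative definite with $b_1(W)=0$, the hypothesis $\chi(W)+\sigma(W)=0$ holds automatically once $H_1(W)=0$; for $H_1(W)=0$ we use that $\pi_1(W)$ is a quotient of $\pi_1(W\smallsetminus A)\cong\mathbb Z$ by the normal closure of the meridian, which generates $H_1$, so $W$ is simply connected. Write $b=b_2(W)$, so $\chi(W)=b$ and $\sigma(W)=-b$. For $A$ null-homologous, $A\cdot A=0$ and $\chi(A)=0$. Since $K_1$ and $K_2$ are concordant, their Tristram--Levine signatures agree, so $\sigma_\omega(S^3,K_1)=\sigma_\omega(S^3,K_2)$; here we use the Alexander nonvanishing condition for the chosen $\omega$. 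For a reducible $A_L$ we have $\nu(A_L)=-2c_1(L)\cdot A=0$ and $\kappa(A_L)=-c_1(L)^2\ge0$. The index formula therefore gives $\operatorname{ind}(A_L)=8\kappa-1\ge -1$, with the minimum $-1=2\cdot0-1$ attained by the trivial splitting $c_1(L)=0$. Hence $\kappa_{\min}^\omega=0$. We also need $\eta_\omega(W,A)$ to be invertible. Because it is a signed sum over the reducibles of minimal index, and here only $c_1(L)=0$ contributes, it should be $\pm1$ (up to a unit); we will check this against the definition in \cite{DISST}.

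\emph{Step 2: equality.} Theorem~\ref{thm:cob_ineq} with $s=0$ gives $r_0^\omega(K_1)\le r_0^\omega(K_2)$. Next we need the reverse inequality. Since $K_1$ and $K_2$ are concordant, $\mathfrak E^\omega_{K_1}$ and $\mathfrak E^\omega_{K_2}$ are (weakly) locally equivalent by the homomorphism property of $\Omega^{\mathfrak E,\omega}$. Because $r_0^\omega$ depends only on the weak local equivalence class, we get $r_0^\omega(K_1)=r_0^\omega(K_2)$. As $r_0^\omega(K_1)<\infty$, the equality case of Theorem~\ref{thm:cob_ineq} applies. It produces an irreducible $\rho\colon\pi_1(W\smallsetminus A)\to SU(2)$ sending the meridian to the conjugacy class of $\operatorname{diag}(e^{2\pi i\omega},e^{-2\pi i\omega})$.

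\emph{Step 3: contradiction.} If $\pi_1(W\smallsetminus A)\cong\mathbb Z$, then $\rho$ has abelian image. It is therefore reducible: it is conjugate into $U(1)$, and its stabilizer contains a circle. This contradicts the irreducibility of $\rho$, so $\pi_1(W\smallsetminus A)$ is not infinite cyclic.

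The main obstacle is Step 1, namely confirming that $(W,A)$ genuinely satisfies all the hypotheses of ``negative definite of strong height $0$''. The delicate point is the invertibility of $\eta_\omega(W,A)$ in $R_\omega$, together with the possibility that several classes $c_1(L)$ with $c_1(L)^2=0$ contribute. For this we use negative definiteness: $c_1(L)^2=0$ forces $c_1(L)$ to be torsion, and torsion vanishes since $W$ is simply connected, so exactly one reducible contributes. The remaining steps are formal consequences of Theorem~\ref{thm:cob_ineq} and concordance invariance.
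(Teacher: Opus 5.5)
Your proposal is correct and follows the paper's proof essentially step for step: the index formula collapses to $8\kappa-1$, giving strong height $0$ with $\kappa_{\min}^\omega(W,A)=0$; concordance invariance of $r_0^\omega$ forces equality in Theorem~\ref{thm:cob_ineq}; and the resulting irreducible representation cannot exist on $\mathbb{Z}$. Your extra step excluding torsion classes $c_1(L)$ makes explicit a point the paper passes over, and your unjustified claim that the meridian generates $H_1(W\smallsetminus A)$ does hold, because integral null-homology of $A$ yields a class in $H^1(W\smallsetminus A)$ taking the value $1$ on the meridian.
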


\begin{proof}
Suppose, for a contradiction, that
$
\pi_1(W\smallsetminus A)\cong\mathbb{Z}.
$ We first claim that $(W,A)$ is negative definite of strong height $0$ and that
$$
\kappa_{\min}^\omega(W,A)=0.
$$
Since $A$ is null-homologous,
$$
A\cdot A=0,
\qquad
c_1(L)\cdot A=0
$$
for every complex line bundle $L$ over $W$. Moreover,
$$
\chi(W)+\sigma(W)=0,
$$
and $K_1$ and $K_2$ are concordant, so their Levine--Tristram signatures agree
at $e^{4\pi i\omega}$. Consequently, the index formula reduces to
$$
\operatorname{ind}(A_L)
=
8\kappa(A_L)-1,
\qquad
\kappa(A_L)=-c_1(L)^2.
$$
Since the intersection form of $W$ is negative definite, $\kappa(A_L)\geq0$,
with equality if and only if $c_1(L)=0$. Thus the unique reducible of minimal
index has index $-1$ and zero energy. Therefore,
$$
\kappa_{\min}^\omega(W,A)=0.
$$
The corresponding element is
$$
\eta_\omega(W,A)=1,
$$
which is invertible in $R_\omega$. Hence $(W,A)$ is negative definite of strong
height $0$.

Since $K_1$ is concordant to $K_2$, we have
$$
r_0^\omega(K_1)=r_0^\omega(K_2).
$$
On the other hand, applying Theorem~\ref{thm:cob_ineq} to $(W,A)$ and using
$\kappa_{\min}^\omega(W,A)=0$ gives
$$
r_0^\omega(K_1)\leq r_0^\omega(K_2).
$$
Thus equality holds in the cobordism inequality. Since
$r_0^\omega(K_1)<\infty$, the equality case of Theorem~\ref{thm:cob_ineq}
produces an irreducible representation
$$
\rho\colon
\pi_1(W\smallsetminus A)
\longrightarrow
SU(2).
$$
This is impossible because $\pi_1(W\smallsetminus A)\cong\mathbb{Z}$ and every
$SU(2)$--representation of a cyclic group is reducible, which completes the
proof.
\end{proof}

We are now ready to prove Theorem~\ref{thm:main1}, the first of our main results. In fact, we prove the following stronger statement, formulated in terms of null-homologous twists.

\begin{thm}\label{mainthmbody}
Let $K$ be a slice knot, and suppose that there is a ribbon concordance from
$K_1\# K_2$ to $K$. If one of the summands, say $K_1$, satisfies
$
\Omega^{\mathfrak{E}, \omega}([K_1]) \neq [0]
$
for some $\omega$ satisfying \eqref{admissible}, then any sequence of null-homologous twists
transforming $K$ into the unknot must contain both positive and negative null-homologous twists.

In particular, if a slice knot $K$ is obtained as a band sum of two nontrivial
knots and one of the summands represents a nontrivial class in
$\Omega^{\mathfrak{E}, \omega}$, then $K$ has untwisting number greater than
one.
\end{thm}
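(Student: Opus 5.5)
Suppose, for a contradiction, that $K$ can be transformed into the unknot by a sequence of $n$ null-homologous twists that are all of the same sign. Mirroring everything if necessary, we may assume they are all positive. Mirroring replaces $K_1$ by its mirror, and $\Omega^{\mathfrak E,\omega}$ of the mirror is nontrivial if and only if $\Omega^{\mathfrak E,\omega}([K_1])$ is, because the mirror is, up to orientation, the concordance inverse. Set $X=\#_n\overline{\mathbb{CP}}^{\,2}$. Since $K_1\#K_2$ is ribbon concordant to $K$, Proposition~\ref{prp:positive-twists-disk} gives a smooth, properly embedded, null-homologous disk $D\subset X^\circ$ with boundary $K_1\# K_2$ and $\pi_1(X^\circ\smallsetminus D)\cong\mathbb Z$. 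Moreover, $K$ is slice and $K_1\#K_2$ is concordant to $K$, so $K_1\#K_2$ is slice. Hence $K_2$ is concordant to $-K_1$.

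Next I convert $D$ into an annulus by the standard splitting trick. Write $(\partial X^\circ,K_1\#K_2)=(B_1,a_1)\cup(B_2,a_2)$, where the $B_i$ are $3$--balls and the $a_i$ are the knotted arcs, so that $B_i\cup_\partial B^3$ with $a_i$ closed up by a trivial arc gives $K_i$. Push $B_2$ slightly into $X^\circ$ relative to its boundary, so that it meets $D$ in a pushed-in copy of $a_2$, and remove a small neighborhood of this pushed-in ball. The result is a cobordism $W$ from $S^3$ to $S^3$ containing the annulus $A=D\cap W$.
\begin{itemize}
\item After orienting appropriately, $A$ runs from $-K_2$ to $K_1$. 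Since $-K_2$ is concordant to $K_1$, the two ends of $A$ are concordant, as Proposition~\ref{prp:annulus-cyclic-obstruction} requires.
\item $W$ is homotopy equivalent to $X$ with two balls removed, so it is negative definite and $b_1(W)=0$.
\item We have $\chi(W)=n$ and $\sigma(W)=-n$, so $\chi(W)+\sigma(W)=0$.
\item $A$ is null-homologous because $D$ is.
\item The complement $W\smallsetminus A$ is obtained from $X^\circ\smallsetminus D$ by removing a product neighborhood of $(B_2\smallsetminus a_2)\times I$. The new meridians are identified with the old ones, so by van Kampen $\pi_1(W\smallsetminus A)\cong\pi_1(X^\circ\smallsetminus D)\cong\mathbb Z$.
\end{itemize}

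We now invoke Proposition~\ref{prp:annulus-cyclic-obstruction}, which needs the incoming end to have finite $r_0^\omega$. By Proposition~\ref{prp:basic-nontrivial-Omega}, or directly by Lemma~\ref{lem:Omega-detected-by-R}, nontriviality of $\Omega^{\mathfrak E,\omega}([K_1])$ means that $r_0^\omega(K_1)<\infty$ or $r_0^\omega(-K_1)<\infty$.
\begin{itemize}
\item If $r_0^\omega(-K_1)<\infty$, use the annulus from $-K_1$ to $K_2$ obtained by pushing in $B_1$ instead of $B_2$.
\item If $r_0^\omega(K_1)<\infty$, the construction above produces an annulus whose incoming end $-K_2$ is concordant to $K_1$. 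By concordance invariance, $r_0^\omega(-K_2)=r_0^\omega(K_1)<\infty$.
\end{itemize}
In either case Proposition~\ref{prp:annulus-cyclic-obstruction} says that $\pi_1(W\smallsetminus A)$ cannot be infinite cyclic, a contradiction.

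For the final assertion, recall from Miyazaki~\cite{Miyazaki:1998} that a band sum of $K_1$ and $K_2$ admits a ribbon concordance from $K_1\#K_2$. If its untwisting number were $\le 1$, it would be unknotted by a single twist, which trivially has only one sign. This contradicts the first part. The trivial case $tu=0$ is excluded because the unknot is not such a band sum: both summands are nontrivial, and $\Omega$ is nonzero on one of them.

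I expect the main obstacle to be the splitting step: making the cut-open cobordism precise, checking the orientation conventions that determine which end is $\pm K_i$, and confirming by van Kampen that cutting does not alter $\pi_1$ of the complement.
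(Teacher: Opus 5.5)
Your skeleton is the paper's. You get the disk from Proposition~\ref{prp:positive-twists-disk} and convert it to a null-homologous annulus in $\#_n\overline{\mathbb{CP}}^{\,2}$ minus two balls. Choosing the incoming end according to whether $r_0^\omega(K_1)$ or $r_0^\omega(-K_1)$ is finite plays the role of the paper's interchange of $K_1$ and $K_2$ before applying Proposition~\ref{prp:annulus-cyclic-obstruction}.

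The step that fails as written is the splitting. A ball $B_2'$ obtained by pushing $B_2$ into $X^\circ$ rel boundary is properly embedded with $\partial B_2'=\partial B_2$. It cobounds a $4$--ball $Q$ with $B_2$ in the collar; in any case it must separate, since $H^1(X^\circ)=0$. Deleting a neighborhood of $B_2'$ leaves two pieces. One is a $4$--ball. The other is a copy of $X^\circ$ whose single boundary sphere still carries $a_1\cup a_2'\cong K_1\#K_2$. So $D\cap W$ is two disks, not an annulus. If you instead push the ball entirely into the interior, the new boundary sphere meets $D$ in the double of the arc, namely $K_2\#-K_2$, and the outer end is still $K_1\#K_2$. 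The annulus you want is not a restriction of $D$ at all: you have to attach a band.

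The repair is what the paper's ``standard pair-of-pants cobordism in a collar'' means. It is also the $3$--handle trick used in the proof of Theorem~\ref{thm:DS-ribbon-obstruction}:
\begin{itemize}
\item Attach a $3$--handle $D^3\times D^1$ to $X^\circ$ along a neighborhood $\Sigma\times D^1$ of the decomposing sphere $\Sigma=\partial B_1=\partial B_2$.
\item Extend $D$ by the band $\gamma\times D^1$, where $\gamma\subset D^3$ is a trivial arc joining the two points of $\Sigma\cap(K_1\#K_2)$.
\end{itemize}
Equivalently, delete an interior ball from the collar and replace the product part of $D$ there by the standard pair of pants.

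With this construction everything you listed holds:
\begin{itemize}
\item $W\cong X\smallsetminus(\mathring B^4\sqcup\mathring B^4)$.
\item $A=D\cup(\gamma\times D^1)$ is an annulus with oriented boundary $K_1\sqcup K_2$. It runs from $-K_2$ to $K_1$ or from $-K_1$ to $K_2$, depending on which end you call incoming, so your orientation bookkeeping is right. Your ``push $B_1$ instead'' is the same pair read backwards.
\item $A$ is null-homologous because the band misses the exceptional spheres.
\item For the fundamental group you are \emph{adding} $(D^3\smallsetminus\gamma)\times D^1$ along $(S^2\smallsetminus\{2\text{ points}\})\times D^1$, not removing anything. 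Both pieces have $\pi_1\cong\mathbb Z$ generated by a meridian, so van Kampen gives $\pi_1(W\smallsetminus A)\cong\pi_1(X^\circ\smallsetminus D)\cong\mathbb Z$.
\end{itemize}
The rest of your argument then goes through.

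Two minor points. For the negative-twist case, pass to $-K$ (mirror and reverse), as the paper does. Then $K_1$ is replaced exactly by its concordance inverse, and you need no claim about $\Omega^{\mathfrak E,\omega}$ under reversing string orientation. Also, $tu=0$ needs no separate argument, because the first part with $n=0$ (the empty sequence) already excludes it.
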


\begin{proof}
We first rule out a sequence of positive null-homologous twists. Suppose, for a
contradiction, that $K$ can be transformed into the unknot by such a sequence of
$n$ twists. By Proposition~\ref{prp:positive-twists-disk} and the given ribbon
concordance, $K_1\mathbin{\#}K_2$ bounds a smooth properly embedded
null-homologous disk
$$
D\subset
\bigl(\#_n\overline{\mathbb{CP}}^{\,2}\bigr)^\circ
$$
whose complement has infinite cyclic fundamental group.

Since $K_1\mathbin{\#}K_2$ is concordant to the slice knot $K$, it is also
slice. Therefore, $-K_1$ is concordant to $K_2$. By
Lemma~\ref{lem:Omega-detected-by-R}, at least one of
$$
r_0^\omega(K_1),
\qquad
r_0^\omega(-K_1)
$$
is finite. If $r_0^\omega(-K_1)<\infty$, then
$r_0^\omega(K_2)<\infty$. Moreover,
$$
\Omega^{\mathfrak{E},\omega}([K_2])
=
\Omega^{\mathfrak{E},\omega}([-K_1])
=
-\Omega^{\mathfrak{E},\omega}([K_1])
\neq[0].
$$
Thus, after interchanging $K_1$ and $K_2$ if necessary, we may assume that
$$
r_0^\omega(K_1)<\infty.
$$

Using the standard pair-of-pants cobordism in a collar of the boundary, we obtain a
null-homologous embedded annulus
$$
A\subset W,
\qquad
W\cong
\bigl(\#_n\overline{\mathbb{CP}}^{\,2}\bigr)
\smallsetminus
\bigl(\mathring{B}^4\sqcup\mathring{B}^4\bigr),
$$
from $K_1$ to $-K_2$. A straightforward application of the
Seifert--van Kampen theorem shows that
$$
\pi_1(W\smallsetminus A)\cong\mathbb{Z}.
$$
Moreover, $W$ is negative definite and
$
\chi(W)+\sigma(W)=0.
$
Since $K_1$ is concordant to $-K_2$, all the hypotheses of
Proposition~\ref{prp:annulus-cyclic-obstruction} are satisfied.
This contradicts Proposition~\ref{prp:annulus-cyclic-obstruction}. Hence $K$
cannot be transformed into the unknot by positive null-homologous twists alone.

It remains to rule out negative null-homologous twists. If such a sequence
existed, then $-K$ could be transformed into the unknot by positive
null-homologous twists. The same argument as above then applies, and we
conclude that every sequence of null-homologous twists transforming $K$ into the
unknot must contain twists of both signs, which completes the proof.
\end{proof}

We now prove Corollary~\ref{cor:whitehead-double}, whose statement we recall:

\begin{repcorollary}{cor:whitehead-double}
Let $K$ be a non-slice strongly quasipositive knot, and set
$J_K=\Wh(K)\#-\Wh(K)$. Then any sequence of null-homologous twists
transforming $J_K$ into the unknot must contain both positive and negative
twists. In particular, $tu(J_K)=2$.
\end{repcorollary}

\begin{proof}
By \cite{Rudolph1993}, the positive Whitehead double $\Wh(K)$ is strongly quasipositive, so Proposition~\ref{prp:basic-nontrivial-Omega} gives
$$
\Omega^{\mathfrak{E},1/4}([\Wh(K)])\neq[0].
$$
On the other hand, $J_K$ is slice. Applying Theorem~\ref{mainthmbody} with $K_1=\Wh(K)$ and $K_2=-\Wh(K)$ shows that every sequence of null-homologous twists transforming $J_K$ into the unknot must contain both positive and negative twists. Thus $tu(J_K)\geq2$.

Finally, the standard diagram of $\Wh(K)$ contains a clasp crossing whose change unknots $\Wh(K)$. The corresponding clasp crossing of $-\Wh(K)$ has the opposite sign. Changing these two crossings transforms $J_K$ into the unknot. Therefore, $tu(J_K)=2$.
\end{proof}

\subsection{Obstructions to Dehn surgery number one}

We now specialize the preceding discussion to the case
$
\omega=\frac14
$
and to pairs of the form $(Y,U)$, where $Y$ is a homology $3$--sphere and $U\subset Y$ is an unknot contained in a ball. Since
$
\Delta_{Y,U}(t)=1,
$
the non-root condition is automatic. We write
$
\mathfrak{E}_Y
:=
\mathfrak{E}^{1/4}(Y,U)
$
for the associated enriched $\mathcal{S}$--complex after the specialization $T=1$, and abbreviate
$$
r_s(Y)
:=
r_s^{1/4}(Y,U).
$$
The homology concordance construction of \cite{DISST} gives a homomorphism
$$
\Omega\colon
\Theta^3_{\mathbb{Z}}
\longrightarrow
\Theta^{\mathfrak{E}}
; \qquad
[Y] \longmapsto  [\mathfrak{E}_Y].
$$
By the duality property of the enriched $\mathcal{S}$--complex, $\mathfrak{E}_{-Y}$ represents the dual of $\mathfrak{E}_Y$. Hence, Lemma~\ref{lem:Omega-detected-by-R} implies that if
$\Omega([Y])\neq[0]$, then
\[
r_0(Y)<\infty
\qquad\text{ or }\qquad
r_0(-Y)<\infty.
\]

The following consequence of Theorem~\ref{thm:cob_ineq} is the only
cobordism property needed below.

\begin{lem}\label{lem:r0-definite-cobordism}
Let $W$ be a smooth compact connected negative-definite cobordism from a homology $3$--sphere $Y_0$ to a homology $3$--sphere $Y_1$. Suppose that
$
[Y_0]=[Y_1]\in\Theta^3_{\mathbb{Z}}
$
and
$
r_0(Y_0)<\infty.
$
Then $W$ cannot be simply connected.
\end{lem}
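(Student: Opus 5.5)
We argue by contradiction, in the same way as Proposition~\ref{prp:annulus-cyclic-obstruction}, now with $\omega=\frac14$ and the pair $(W,S)$ where $S=U\times[0,1]$ is a product annulus. We put $S$ in a collar of $Y_0$ so that it joins the unknot $U\subset Y_0$ to the unknot $U\subset Y_1$, after isotoping the collar in $W$ so that it lies inside a small $4$--ball times an interval. Suppose $\pi_1(W)=1$.

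\emph{Step 1: the reducibles of $(W,S)$.} The annulus $S$ sits in a ball times an interval, so it is null-homologous and $S\cdot S=0$. Hence $c_1(L)\cdot S=0$ for every line bundle $L$. This gives $\nu(A_L)=0$ and $\chi(S)=0$. The unknot has vanishing Levine--Tristram signature, so $\sigma_{1/4}(Y_i,U)=0$.

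\emph{Step 2: the Euler characteristic term.} Since $\pi_1(W)=1$, we have $b_1(W)=0$, and by duality $b_3(W)=0$. Then $\chi(W)=b_2(W)$. Because $W$ is negative definite between homology spheres, $\sigma(W)=-b_2(W)$. Therefore $\chi(W)+\sigma(W)=0$.

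\emph{Step 3: strong height $0$.} With the vanishing from Steps~1 and~2, the index formula reduces to $\operatorname{ind}(A_L)=8\kappa(A_L)-1$ with $\kappa(A_L)=-c_1(L)^2\ge 0$. Exactly as in Proposition~\ref{prp:annulus-cyclic-obstruction}, the minimum occurs only at $c_1(L)=0$. So $(W,S)$ is negative definite of strong height $0$, with $\kappa^{1/4}_{\min}=0$ and $\eta=1$ invertible, and this persists after specializing $T=1$.

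\emph{Step 4: the contradiction.} Theorem~\ref{thm:cob_ineq} gives $r_0(Y_0)\le r_0(Y_1)$. The homomorphism $\Omega$ is defined on $\Theta^3_{\mathbb Z}$ and $[Y_0]=[Y_1]$, so $\mathfrak E_{Y_0}$ and $\mathfrak E_{Y_1}$ are weakly locally equivalent, and therefore $r_0(Y_0)=r_0(Y_1)$. Equality thus holds with $r_0(Y_0)<\infty$ and $\kappa_{\min}=0$. The equality clause of Theorem~\ref{thm:cob_ineq} then produces an irreducible representation $\rho\colon\pi_1(W\smallsetminus S)\to SU(2)$ sending the meridian of $S$ to the traceless conjugacy class.

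Since $S$ is a boundary-parallel annulus inside a ball times an interval, Seifert--van Kampen gives $\pi_1(W\smallsetminus S)\cong \pi_1(W)*_{1}\mathbb Z\cong\mathbb Z$, generated by the meridian. Every $SU(2)$--representation of $\mathbb Z$ is reducible, which is a contradiction. So $W$ cannot be simply connected.

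The main point needing care is checking that the specialization $T=1$ and the product unknot annulus are compatible with the hypotheses of Theorem~\ref{thm:cob_ineq}, including the fact that $r_0$ depends only on the homology cobordism class. We take the latter from the fact that $\Omega$ is a homomorphism on $\Theta^3_{\mathbb Z}$.
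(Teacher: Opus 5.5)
Your proof is the paper's argument essentially step for step: a trivial unknotted annulus along an arc from $Y_0$ to $Y_1$, the index formula reducing to $8\kappa(A_L)-1$ (so strong height $0$ with $\kappa^{1/4}_{\min}=0$), the equality $r_0(Y_0)=r_0(Y_1)$ from weak local equivalence, and the equality clause of Theorem~\ref{thm:cob_ineq} giving an irreducible representation of $\pi_1(W\smallsetminus S)\cong\mathbb Z$, which is impossible.

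Fix one slip in Step~2: since $\partial W$ has two components, $b_3(W)=\operatorname{rank}H^1(W,\partial W)=1+b_1(W)=1$, not $0$ (already $H_3(Y\times[0,1])\cong\mathbb Z$). It is $\chi(W)=1-0+b_2(W)-1+0=b_2(W)$ that gives $\chi(W)+\sigma(W)=0$; your stated Betti numbers would give $1+b_2(W)$. Also, the annulus should sit in a neighborhood $B^3\times[0,1]$ of a properly embedded arc, not in a collar of $Y_0$ or a $4$--ball times an interval.
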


\begin{proof}
Suppose, for a contradiction, that $W$ is simply connected. Choose a properly embedded path from $Y_0$ to $Y_1$. Transporting a standard unknot along this path gives a trivial annulus $A\subset W$ from an unknot $U_0\subset Y_0$ to an unknot $U_1\subset Y_1$. Thus $(W,A)$ is a cobordism of pairs from $(Y_0,U_0)$ to $(Y_1,U_1)$. A straightforward application of the Seifert--van Kampen theorem shows that
$$
\pi_1(W\smallsetminus A)\cong\mathbb{Z}.
$$

We claim that $(W,A)$ is negative definite of strong height $0$ and that
$\kappa_{\min}^{1/4}(W,A)=0$.
Since $A$ is a trivial annulus,
$$
\chi(A)=0,
\qquad
A\cdot A=0.
$$
Moreover, the boundary signature terms vanish and
$$
\chi(W)+\sigma(W)=0.
$$
Since $\omega=\frac14$, the monopole-number term in the reducible-index formula also vanishes. Consequently, the index formula reduces to
$$
\operatorname{ind}(A_L)
=
8\kappa(A_L)-1,
\qquad
\kappa(A_L)=-c_1(L)^2.
$$
Since the intersection form of $W$ is negative definite, $\kappa(A_L)\geq0$, with equality if and only if $c_1(L)=0$. Thus the unique reducible of minimal index has index $-1$ and zero energy. Therefore,
$$
\kappa_{\min}^{1/4}(W,A)=0.
$$
The associated element $\eta_{1/4}(W,A)$ is a unit after the specialization $T=1$. Hence $(W,A)$ is negative definite of strong height $0$.

Since
$
[Y_0]=[Y_1]
$
in $\Theta^3_{\mathbb{Z}}$, their enriched complexes are weakly locally equivalent, and hence
$$
r_0(Y_0)=r_0(Y_1).
$$
On the other hand, applying Theorem~\ref{thm:cob_ineq} to $(W,A)$ and using $\kappa_{\min}^{1/4}(W,A)=0$ gives
$$
r_0(Y_0)\leq r_0(Y_1).
$$
Thus equality holds in the cobordism inequality. Since $r_0(Y_0)<\infty$, the equality case of Theorem~\ref{thm:cob_ineq} produces an irreducible representation
$$
\rho\colon
\pi_1(W\smallsetminus A)
\longrightarrow
SU(2).
$$
This is impossible because $\pi_1(W\smallsetminus A)\cong\mathbb{Z}$ and every $SU(2)$--representation of a cyclic group is reducible, which completes the proof.
\end{proof}

We are now ready to prove the surgery obstruction stated in the introduction, whose statement we recall for convenience.

\begin{reptheorem}{thm:DS-ribbon-obstruction}
Let $Y$ be a homology $3$--sphere representing the trivial element in
$\Theta^3_\Z$, and  suppose that there is a ribbon homology cobordism from
$Y_1 \mathbin{\#} Y_2$ to $Y$.  If one of the summands, say $Y_1$, satisfies
$\Omega([Y_1])\neq [0]$,
then $Y$ cannot be obtained by surgery on a knot in
$S^3$.
\end{reptheorem}

\begin{proof}
Suppose, for a contradiction, that $Y$ is obtained by surgery on a knot in $S^3$. Then there exists a smooth compact simply connected definite $4$--manifold $X$ with
$\partial X=-Y$;
see~\cite{Auckly,AucklyHyperbolic}.
Let $R$ be a ribbon homology cobordism from $Y_1\mathbin{\#}Y_2$ to $Y$. Since $[Y]=0$ in $\Theta^3_{\mathbb{Z}}$, we have $[Y_2]=[-Y_1]$. Glue $X$ to $R$ along $Y$ and set $V:=R\cup_YX$. Then $\partial V=-(Y_1\mathbin{\#}Y_2)$. Since $R$ is a homology cobordism, the intersection form of $V$ is identified with that of $X$. In particular, $V$ is definite. Moreover, by Remark~\ref{rmk:ribbon-fillings}, the manifold $V$ is simply connected.

Next, attach a $3$--handle to $V$ along the separating $2$--sphere in
$\partial V=-(Y_1\mathbin{\#}Y_2)$
which realizes the connected-sum decomposition. Denote the resulting $4$--manifold by $\widetilde{V}$. Then
$\partial\widetilde{V}=-Y_1\sqcup-Y_2$.
Attaching a $3$--handle changes neither the fundamental group nor the intersection form. Thus $\widetilde{V}$ is simply connected and definite.

By the assumption $\Omega([Y_1])\neq[0]$, we have
$$r_0(Y_1)<\infty
\qquad\text{ or }\qquad
r_0(-Y_1)<\infty.
$$
If $\widetilde{V}$ is negative definite, its possible incoming ends are $Y_1$ and $Y_2$. Since
$[Y_2]=[-Y_1]$,
one of these choices has finite $r_0$. The same argument applies if $\widetilde{V}$ is positive definite, after reversing its orientation.

Consequently, in all cases we obtain a smooth compact simply connected negative-definite cobordism satisfying the hypotheses of Lemma~\ref{lem:r0-definite-cobordism}, a contradiction. This completes the proof.
\end{proof}

We now prove Theorem~\ref{thm:example}, whose statement we recall.

\begin{reptheorem}{thm:example}
Let $Y$ be a homology $3$--sphere. Suppose that $Y$ is one of the following:
\begin{itemize}
\item A Seifert homology $3$--sphere with positive Fintushel--Stern
$R$--invariant \cite{FintushelStern1985}, or more generally, a nontrivial integral linear combination of
Seifert homology $3$--spheres detected by~\cite[Corollary~2.1]{Furuta1990HomologyCobordism};
\item A nontrivial integral linear combination of homology $3$--spheres of the form
$\{S^3_{1/n}(K)\}_{n\in  \Z_{>0}}$, where $K$ is a squeezed knot with positive Rasmussen invariant.
\end{itemize}
Then $\Omega([Y])\neq [0]$.
\end{reptheorem}

\begin{proof}
We first consider the Seifert case. Suppose that $[Y]$ is a nontrivial
integral linear combination of Seifert homology $3$--spheres detected by
\cite[Corollary~2.1]{Furuta1990HomologyCobordism}. More precisely, write
\[
[Y]
=
\sum_{k=1}^{N} c_k[\Sigma_k]
\in \Theta^3_{\mathbb Z},
\]
where $c_1,\ldots,c_N\in\mathbb Z$ are not all zero and
\[
\Sigma_k=\Sigma(a_{k,1},\ldots,a_{k,n_k}),
\qquad 1\leq k\leq N,
\]
are Seifert homology $3$--spheres satisfying
\[
R(\Sigma_k)>0
\]
for every $k$, and
\[
a_{1,1}\cdots a_{1,n_1}
<
a_{2,1}\cdots a_{2,n_2}
<
\cdots
<
a_{N,1}\cdots a_{N,n_N}.
\]

For every oriented homology $3$--sphere $M$, the invariant
\[
r_0^{\mathrm{NST}}(M)\in[0,\infty]
\]
was defined in \cite{NST24} as a $3$--manifold analogue of
\eqref{equation_r_s} and was used to recover Furuta's theorem
\cite[Corollary~2.1]{Furuta1990HomologyCobordism}. By
\cite[Proof of Corollary~5.5]{NST24}, at least one of
$r_0^{\mathrm{NST}}(Y)$ and $r_0^{\mathrm{NST}}(-Y)$ is finite.

On the other hand, the comparison result
\cite[Equation~(120)]{DISST} gives
\[
r_0(M)\leq 2r_0^{\mathrm{NST}}(M)
\]
for every homology $3$--sphere $M$. The factor of $2$ reflects the
different normalizations of the Chern--Simons functional used in
\cite{DISST} and \cite{NST24}. Consequently, at least one of $r_0(Y)$
and $r_0(-Y)$ is finite.\footnote{More precisely, let
$S\subset [0,1]\times Y$
be an unknotted properly embedded disk capping off the unknot $U_1\subset Y$. Then $([0,1]\times Y,S)$ is a cobordism of pairs from $(Y,U_1)$ to $(Y,\varnothing)$. Comparing the associated filtration-preserving chain map and its dual with the reducible-counting maps yields the comparison of the $r_0$-invariants; see \cite[Equation~(119)]{DISST} and \cite[Proposition~5.3]{DS20}.} Consequently, at least one of $r_0(Y)$ and $r_0(-Y)$ is finite. By
Lemma~\ref{lem:Omega-detected-by-R}, we conclude that
\[
\Omega([Y])\neq[0].
\]

We next consider the surgery family. Let $K$ be a squeezed knot with
$s(K)>0$. Since all slice-torus invariants agree on squeezed
knots~\cite{FellerLewarkLobbSqueezed} and $2\widetilde{s}$ is a
slice-torus invariant~\cite[Theorem~4.24]{DISST}, we have
\[
\widetilde{s}(K)=\frac{1}{2}s(K)>0.
\]
It follows that
\[
h(S^3_{+1}(K),U)<0,
\]
where $U$ is an unknot in $S^3_{+1}(K)$ and $h$ denotes the
Fr{\o}yshov-type invariant defined in~\cite{DS19}. For $n\geq 1$, set
\[
Y_n:=S^3_{1/n}(K).
\]

By \cite[Equation~(20)]{DISST}, the Fr{\o}yshov invariant factors
through the local equivalence class of the underlying
$\mathcal{S}$--complex. Since $h(Y_1,U)<0$, it follows that
$\mathfrak{E}_{Y_1}$ is not weakly locally equivalent to the trivial
enriched $\mathcal{S}$--complex. Therefore,
Lemma~\ref{lem:Omega-detected-by-R} implies that
\[
r_0(Y_1)<\infty
\qquad\text{ or }\qquad
r_0(-Y_1)<\infty.
\]
On the other hand, $Y_1=S^3_{+1}(K)$ bounds the simply connected
positive-definite $4$--manifold given by the $1$--framed trace of
$K$. After reversing orientation and equipping the resulting
cobordism with the standard null-homologous annulus between the
boundary unknots, we obtain a cobordism of pairs of strong height
zero. Thus, \cite[Corollary~5.54]{DISST} gives
$r_0(-Y_1)=\infty$.
It follows that
$r_0(Y_1)<\infty$.
Moreover, the cobordism argument in the proof of
\cite[Theorem~5.12]{NST24}, applied analogously to the corresponding
pairs with unknots, implies that
\begin{itemize}
    \item $\infty>r_0(Y_1)>r_0(Y_2)>\cdots$;
    \item $r_0(-Y_i)=\infty$ for all $i$.
\end{itemize}
Let
\[
[Y]=\sum_{n=1}^{N}a_n[Y_n]\in\Theta^3_{\mathbb{Z}}
\]
be a nonzero finite linear combination with $a_N\neq 0$. If $a_N>0$,
then \cite[Proposition~5.55]{DISST}, applied to the corresponding
pairs with unknots, implies that
$r_0(Y)<\infty$.
Thus, Lemma~\ref{lem:Omega-detected-by-R} implies that $\Omega([Y])\neq[0]$.
If $a_N<0$, we apply the same argument to $-Y$. Since the coefficient
of $Y_N$ in $[-Y]$ is $-a_N>0$, we obtain
$r_0(-Y)<\infty$.
Again, Lemma~\ref{lem:Omega-detected-by-R} implies that
$\Omega([Y])\neq[0]$.
This proves the second family and completes the proof.
\end{proof}

\bibliographystyle{alpha}
\bibliography{prive}

{\small
\bigskip

\noindent\textsc{Department of Mathematical Sciences, KAIST, Daejeon, Republic of Korea}\\
\textit{Email address}: \texttt{himori@kaist.ac.kr}

\medskip

\noindent\textsc{Department of Mathematical Sciences, KAIST, Daejeon, Republic of Korea}\\
\textit{Email address}: \texttt{jungpark0817@kaist.ac.kr}

\medskip

\noindent\textsc{Department of Mathematics, Graduate School of Science, Kyoto University, Kyoto, Japan}\\
\textit{Email address}: \texttt{masaki.taniguchi@math.kyoto-u.ac.jp}
\par}

\end{document}